\documentclass[12pt]{article}
\usepackage{amsfonts}
\usepackage{graphicx}

\usepackage{amssymb,amsbsy,amsmath,amsfonts,amssymb,amscd,amsthm}
\usepackage{stmaryrd}
\usepackage{mathrsfs}
\usepackage[active]{srcltx}
\usepackage{bbm}
\usepackage{tikz}

\usepackage{paralist}
\usepackage{graphics} 
\usepackage{epsfig} 
\usepackage{epstopdf} 
\usepackage[colorlinks=true]{hyperref}
\usepackage{multirow}
\hypersetup{urlcolor=blue, citecolor=red}

 \newtheorem{theorem}{Theorem}[section]
\newtheorem{lemma}{Lemma}[section]
\newtheorem{remark}{Remark}[section]
\newtheorem{corollary}{Corollary}[section]
\newtheorem{definition}{Definition}[section]
\newtheorem{proposition}{Proposition}[section]

\def\be{\begin{equation}}
\def\ee{\end{equation}}
\def\bea{\begin{eqnarray}}
\def\eea{\end{eqnarray}}
\def\ba{\begin{array}}
\def\ea{\end{array}}

\def\bt{\begin{theorem}}
\def\et{\end{theorem}}
\def\bl{\begin{lemma}}
\def\el{\end{lemma}}
\def\br{\begin{remark}}
\def\er{\end{remark}}
\def\bc{\begin{corollary}}
\def\ec{\end{corollary}}
\def\bd{\begin{definition}}
\def\ed{\end{definition}}
\def\bp{\begin{proposition}}
\def\ep{\end{proposition}}

\def\b{\beta}

\def\k{\kappa}

\def\f{\phi}

\def\o{\omega}

\def\cA{{\cal A}}

\def\cH{{\cal H}}

\def\cO{{\cal O}}

\def\Re {\mathcal {R}e}

\def\ii{{\mbox{i}}}

\def\dis{\displaystyle}

\newcommand{\ol}[1]{\overline{#1}}

\begin{document}

 \title{
Stability of the Timoshenko Beam Equation with One
{\color{black} Weakly Degenerate Local}  Kelvin-Voigt Damping
\thanks{
This work was supported by  National Natural Science Foundation of China (grants No.12271035) and Beijing Natural Science Foundation (grant No. 1232018).
\medskip} }
\author{Ruijuan Liu$^1$\thanks{Corresponding author, email: 3120205705@bit.edu.cn} and Qiong Zhang$^1$\\
		$^{1}$School of Mathematics and Statistics, Beijing Key \\Laboratory
on MCAACI, Beijing Institute of Technology, \\Beijing, 100081,   P. R. China }

\maketitle



\begin{center}
\begin{minipage}{5.5in}

\noindent

 {\bf Abstract.}
We consider the Timoshenko beam equation with locally distributed Kelvin-Voigt damping, which affects either the shear stress or the bending moment. The damping coefficient exhibits a singularity, causing its derivative to be discontinuous. By using the frequency domain method and multiplier technique, we prove that the associated semigroup is polynomial stability. Specifically, regardless of whether the local Kelvin-Voigt damping acts on the shear stress or the bending moment, the system  decays   polynomially with rate  $t^{-\frac{1}{2}}$.

 \vskip 2mm

 {\bf Keywords:} Timoshenko beam, semigroup,   local Kelvin-Voigt damping, polynomial stability.

 \noindent
 {\bf Mathematics Subject Classification 2000:} 35M20, 35Q72, 74D05

\end{minipage}
\end{center}

\vskip 10mm


\section{Introduction}
\setcounter{equation}{0} \setcounter{theorem}{0}
\setcounter{lemma}{0} \setcounter{definition}{0}
\setcounter{proposition}{0} \setcounter{remark}{0}
\setcounter{corollary}{0} \setcounter{equation}{0}

In this paper, we consider the following Timoshenko beam equation with local viscoelastic damping of Kelvin-Voigt:
\begin{equation}
\label{system}
\left\{\begin{array}{lcl}
 \rho_1 w_{tt} -  \big[\kappa_1\,\big(w'+ \phi\big)
+ D_{1} \,(w'+ \phi )_{t}\big]' =0 &\mbox{in}& (-1,1)\times{\mathbb R}^+,
\\ \noalign{\medskip}   \displaystyle
 \rho_2 \phi_{tt}
- \big(\kappa_2 \,\phi' +  D_{2} \,\phi_{t}'\big)'
 + \kappa_1\,\big(w'+ \phi\big) +D_{1} \,\big(w'+ \phi \big)_{t} =0 &\mbox{in}& (-1,1)\times{\mathbb R}^+,
\\ \noalign{\medskip}   \displaystyle
 w(x,0) = w_0(x), \; w_t(x,0) = w_1(x) \;&\mbox{  in }&  \;(-1,1),
\\ \noalign{\medskip}   \displaystyle
\phi(x,0) = \phi_0(x), \;
\phi_t(x,0) = \phi_1(x) \quad  \; &\mbox{  in }&  \;(-1,1),
\\ \noalign{\medskip}   \displaystyle
 w(-1,\,t)= w(1,\,t)=\phi(-1,\,t)= \phi(1,\,t)=0\quad  \;  &\mbox{  in }&  \;{\mathbb R}^+.
 \end{array}\right.
\end{equation}
where the  parameters $\rho_1,\;\rho_2,\;\kappa_1,\;\kappa_2$ are positive and one of damping coefficient functions $D_1(\cdot)$ or $D_2(\cdot)$ is zero.
The physical meanings of the functions and  parameters in \eqref{system} are explained in  \cite{Timoshenko}.
Functions $D_1(\cdot),\,D_2(\cdot)$  belong to $L^\infty(-1,1)$ and satisfy
\begin{equation}\label{H1}
D_i(x) =0 \;  \mbox{  for }  \;    x\in[-1,0],
 \quad  D_i(x)=a_i(x) \;  \mbox{for}\;  x\in (0,1],\; i=1,2.
 \tag{H1}\end{equation}
 Let $a_i\in C([0,1])\cap C^1 ((0,1])$ be a function satisfying the following assumption:
\begin{equation}
\label{H2}
\left\{\begin{array}{lcl}
(i)\quad a_i(x)>0,\; \forall x\in(0,1],\; a_i(0)=0
\\ \noalign{\medskip}   \displaystyle
(ii)\quad 0\leq\alpha_i\doteq\sup\limits_{0<x\leq1}\frac{x|a_i^\prime(x)|}{a_i(x)}<1.
\tag{H2}
 \end{array}\right.
\end{equation}

The energy of the system (\ref{system})
is defined by
\begin{equation}
\label{energy}
E(t) = {1\over2}\int_{-1}^1 \Big(\kappa_1|w' + \f|^2
+\kappa_2|\f_x|^2 + \rho_{1}|w_t|^2 +\rho_{2}|\f_t|^2
\Big) dx.
\end{equation}
It is easy to check that
\begin{equation}\label{energydiss}
{{d}\over{dt}}E(t) = -\int_{-1}^{1} \big[D_{1} \,|w_{t}' + \phi_t|^2 +
D_{2} \,|\phi_{t}'  |^2\big] dx.
\end{equation}
Thus, the system (\ref{system}) is dissipative.

In recent decades, a large number of research has focused on studying the stability properties of the Timoshenko beam equation under various damping (see \cite{Almeida, Alves, Tavares, Akil3, Contreras, Cavalcanti, Enyi, Sare, Guesmia, Jorge Silva, Keddi, Khemmoudj, Rivera,  Messaoudi, Mercier, Wehbe1}).    Among these,
 Kelvin-Voigt damping is a viscoelastic damping possessing both elasticity and viscosity. In \cite{Huang}, exponential stability and analyticity of the elastic systems with global Kelvin-Voigt damping were proved. Notably, the stability properties  of the wave equation with local Kelvin-Voigt damping depend on both the position of the damping and the regularity of the damping coefficient (\cite{Akil, Akil1, Akil2, Akil4, Chen, Liu2, Wehbe, Zhang}).

The stability analysis of the Timoshenko beam with local Kelvin-Voigt damping was first addressed in \cite{Zhao}, where exponential stability was obtained when damping coefficient functions are strictly positive on a subinterval of the spatial domain $[0,L]$ and   $C^{1,1}([0;L])$-continuous. Later, Tian and Zhang (\cite{Tian}) introduced more general smoothness conditions on the coefficient functions $D_1(\cdot),\;D_2(\cdot)$ and obtained   exponential stability  or polynomial stability. Liu and Zhang (\cite{Liu}) further studied the relationship between the decay rate of stability and the smoothness properties of the constitutive law. Specifically, they proved that the solution of system \eqref{system} with $D_1(x),\;D_2(x) $ satisfying  \eqref{H1} and  \eqref{H2} is differentiable if $\alpha_1,\; \alpha_2>1$, exponentially stable if $\alpha_1,\; \alpha_2\geq1$ and $D_1^3(x)\leq CD_2(x)$, and polynomially stable of order $t^{-\alpha}$ $(\alpha=\min\limits_{i=1,2}{(1-\alpha_i)^{-1}})$ if $0\leq \alpha_1,\; \alpha_2<1$.
Recently, a sharper polynomial decay rate $t^{-\frac{2-\alpha}{1-\alpha}} \;( \alpha=\min\{\alpha_1,\alpha_2\})$ was obtained for   system \eqref{system} by  applying some Hardy-type inequalities (\cite{R.Liu}).


{\color{black} In \cite{Tavares}, the author studied the Timoshenko system with a global viscoelastic
	dissipation  coupled on the shear force
	\begin{equation}
	\left\{\begin{array}{lcl}
	\displaystyle
	\rho_1 w_{tt} -  \kappa_1\big(w'+ \phi\big)'
	+\kappa_1\int_0^tg(t-s)\big(w'+ \phi \big)'(s)ds =0 &\mbox{in}& (0,L)\times{\mathbb R}^+,
	\\ \noalign{\medskip}   \displaystyle
	\rho_2 \phi_{tt}
	- \kappa_2 \phi''
	+ \kappa_1\,\big(w'+ \phi\big) -\kappa_1 \int_0^tg(t-s)\big(w'+ \phi \big)'(s)ds=0 &\mbox{in}& (0,L)\times{\mathbb R}^+,
	\end{array}\right.
	\end{equation}	
	and obtained that  uniform stability holds if and only if the wave speeds are equal.
	On the other hand, the Timoshenko system with one global viscoelastic
	Kelvin-Voigt damping is polynomially stable with optimal decay rate
	$t^{-\frac{1}{2}}$ (\cite{Malacarne}).
	Moreover, this result also holds for the Timoshenko system with one local
	non-smooth Kelvin-Voigt damping (\cite{Wehbe1}).
An natural problem is: how about the Timoshenko system with one degenerate local Kelvin-Voigt damping.}

In this paper, we analyze the Timoshenko beam system \eqref{system}, where the coefficient functions $D_1(\cdot)$, $D_2(\cdot)$   satisfy assumptions \eqref{H1}-\eqref{H2} and  one of the coefficient functions is assumed to vanish. We obtain that the solution of the system \eqref{system} decays polynomially with the   rate   $t^{-\frac{1}{2}}$, regardless of whether the local viscoelastic damping affects the shear stress or the bending moments. Notably, the same polynomial decay rate has been previously proved for the Timoshenko beam system with one local non-continuous Kelvin-Voigt damping, as established in \cite{Wehbe1}.
Our approach depends on the frequency domain method  (\cite{Borichev}), and the classic multiplier method (\cite{Komornik}). To deal with the singularity of the constitutive law, we introduce cut-off functions as multipliers and use proper inequalities.

This paper is organized as follows: In Section 2, we present our main results and provide some preliminaries. Section 3 is dedicated to proving the main results. {\color{black}The conclusion is given in Section 4.} Throughout the paper, we denote $C$ a generic positive constant.

\section{Main Results and Preliminaries}
\setcounter{equation}{0} \setcounter{theorem}{0}
\setcounter{lemma}{0} \setcounter{definition}{0}
\setcounter{proposition}{0} \setcounter{remark}{0}
\setcounter{corollary}{0} \setcounter{equation}{0}

In this section, we shall present the main results and preliminaries.
First, introducing the energy space
$$
\cH = H_0^1(-1,1) \times H_0^1(-1,1) \times L_2(-1,1) \times L_2(-1,1)
$$
with norm
$$
\|(w,\f, v,\psi)\|_{\cH}^2 =
 \int_{-1}^1 \big(\kappa_1|w' + \f|^2
+\kappa_2|\f'|^2 + \rho_{1}|v|^2 +\rho_{2}|\psi|^2\;\big)dx
$$
for all $(w,\f, v,\psi)\in \cH.$

Define a linear unbounded operator $\cA \:: D(\cA) \subset\cH \to \cH$ by
\begin{equation}
\label{def-a}
\begin{array}{l}\dis
\cA (w,\f, v,\psi) = \Big(
v,\;\psi, {1\over\rho_{1}}T', {1\over{\rho_{2}}} (R'-T)
\Big),  \quad
\\ \noalign{\medskip}   \displaystyle
D(\cA) = \Big\{(w,\f, v,\psi)\in \cH\: \Big|\:
v,\psi \in H_0^1(-1,1), \:
T',\, R' \in L^2(-1,1)
\Big\},
\end{array}
\end{equation}
where $T = \kappa_1(w'+ \f)+ D_{1}(v'+ \psi),  \;
R  = \kappa_2 \phi'+D_{2} \psi'$.

Then system \eqref{system} can be written as the evolution equation
\begin{equation}\label{evol}
U_t = {\cal A} \, U, \quad \forall t>0,\;
U(0) = U_0=(w_0, \, w_1, \, \phi_0, \,\phi_1) \in {\cal H}.
\end{equation}
By the same arguments as {\color{black}Theorem 2.1 in \cite{Tian} or Corollary 2.1 in \cite{Zhao}}, we can get the following well-posedness result for the system \eqref{system}.
\begin{lemma}\label{th-well}   Assume coefficient functions $D_{1}$ and $D_{2}$ are continuous, nonnegative and satisfy
\eqref{H1}-\eqref{H2}. Then, the  operator  $\cA$  generates a C$_0$-semigroup of contractions $e^{t{\cal A}}$ on $\cH$, and $\ii\mathbb{R}\subset \rho(\cA)$, the resolvent set of $\cA.$
\end{lemma}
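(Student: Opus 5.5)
We split the proof into two parts. First we show that $\cA$ generates a $C_0$-semigroup of contractions, using the Lumer--Phillips theorem. Then we show $\ii\mathbb{R}\subset\rho(\cA)$ by a contradiction argument combined with the unique continuation property of the undamped part of the system.

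\textbf{Dissipativity.} For $U=(w,\f,v,\psi)\in D(\cA)$ we integrate by parts, using $v,\psi\in H_0^1(-1,1)$. The elastic cross terms cancel, exactly as in the computation behind \eqref{energydiss}, and we get
$$\Re\langle \cA U,U\rangle_{\cH}=-\int_{-1}^1\big(D_1|v'+\psi|^2+D_2|\psi'|^2\big)dx\le 0.$$
So $\cA$ is dissipative. Density of $D(\cA)$ follows because it contains $(C_c^\infty)^4$.

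\textbf{Maximality.} We show $0\in\rho(\cA)$. Given $F=(f_1,f_2,f_3,f_4)\in\cH$, the equation $\cA U=F$ forces $v=f_1$ and $\psi=f_2$. It then reduces to the system
$$T'=\rho_1 f_3,\qquad R'-T=\rho_2 f_4,$$
with $T=\kappa_1(w'+\f)+D_1(f_1'+f_2)$ and $R=\kappa_2\f'+D_2f_2'$. The $D_i$-terms are known $L^2$ functions, since $D_i\in L^\infty$. So this is a uniformly elliptic problem in $(w,\f)\in H_0^1\times H_0^1$. The bilinear form $\int(\kappa_1(w'+\f)\overline{(\tilde w'+\tilde\f)}+\kappa_2\f'\overline{\tilde\f'})$ is coercive on $H_0^1\times H_0^1$ by Poincaré's inequality, and the right-hand side is a bounded antilinear functional. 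Lax--Milgram therefore gives a unique solution, and the resulting $U$ lies in $D(\cA)$. Hence $0\in\rho(\cA)$, and Lumer--Phillips gives the contraction semigroup. Note that only $D_i\in L^\infty$ is needed here, so the singularity of $a_i'$ at $0$ plays no role.

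\textbf{Imaginary axis.} Since $0\in\rho(\cA)$ and the resolvent set is open, it suffices to exclude $\ii\lambda\in\sigma(\cA)$ for $\lambda\neq0$. Suppose, for contradiction, that $\ii\lambda\in\sigma(\cA)$; taking the infimum of $|\lambda|$ over such points, we may assume $\ii\lambda$ lies on the boundary of the spectrum. The plan then has four steps.
\begin{enumerate}
\item Obtain approximate eigenvectors: $U_n\in D(\cA)$ with $\|U_n\|=1$ and $(\ii\lambda_n-\cA)U_n\to0$, where $\lambda_n\to\lambda$.
\item Take the real part against $U_n$. This gives $\int D_1|v_n'+\psi_n|^2+D_2|\psi_n'|^2\to0$, so $\sqrt{D_i}$ times the damped strain tends to $0$ on $(0,1)$.
\item Show that the undamped strain also goes to $0$ on each $(\varepsilon,1)$, since $a_i\ge c_\varepsilon>0$ there. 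From $v_n=\ii\lambda_n w_n+o(1)$ we get $w_n'+\f_n\to0$ (or $\f_n'\to0$) in $L^2(\varepsilon,1)$. Feeding this into the equations and using the compact embedding $H^1\hookrightarrow L^2$ then gives $U_n\to0$ in $\cH$ restricted to $(\varepsilon,1)$.
\item Propagate this to $(-1,\varepsilon)$, where the system is the undamped Timoshenko system. Use a multiplier $q(x)\,\overline{(\cdot)'}$ with $q$ a cut-off function vanishing near $x=1$ and equal to $x+1$ on $[-1,\varepsilon/2]$. The boundary terms at $x=-1$ are nonnegative, and the interior terms are controlled by the smallness on $(\varepsilon,1)$. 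This gives $\|U_n\|\to0$, which contradicts $\|U_n\|=1$.
\end{enumerate}

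The main obstacle is step 3 when only one damping acts. The other component, for instance $\f$ when $D_2=0$, must then be recovered through the coupling. Indeed, $w_n'+\f_n\to0$ and the first equation give $\lambda^2\rho_1 w_n\approx -T_n'$ small in $H^{-1}$. The second equation, with $\lambda\neq0$, then forces $\f_n$ small, and consequently $\f_n'$ small by an interior energy estimate. An alternative route that avoids the singular point is a direct injectivity-plus-compactness argument: $\cA^{-1}$ is compact because $D(\cA)\hookrightarrow\cH$ compactly. In that approach $\sigma(\cA)$ consists of eigenvalues only. An eigenvector for $\ii\lambda$ has zero damped strain on $(0,1]$, and hence satisfies a constant-coefficient ODE system there. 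Cauchy uniqueness of that system, with zero data propagated through $x=0$, forces $U=0$. I would present this second route, as in \cite{Tian,Zhao}, since it is cleaner.
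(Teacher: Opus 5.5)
The generation part is correct: the dissipativity computation and the Lax--Milgram proof of $0\in\rho(\cA)$ are fine. (One slip: $(C_c^\infty)^4\not\subset D(\cA)$ in general. For example, if $a_1=x^{1/4}$ then $a_1'\notin L^2(0,1)$, so $D_1(v'+\psi)\notin H^1$ whenever $v'(0)+\psi(0)\neq0$. Density instead follows automatically from maximal dissipativity on a Hilbert space.)

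\textbf{The gap.} It lies in the route you say you would present for $\ii\mathbb{R}\subset\rho(\cA)$. The space $D(\cA)$ does \emph{not} embed compactly into $\cH$, so $\cA^{-1}$ is not compact and you cannot conclude that $\sigma(\cA)$ consists only of eigenvalues. The graph norm controls $T=\kappa_1(w'+\f)+D_1(v'+\psi)$ in $H^1$, but controls $D_1(v'+\psi)$ only in $L^2$, so nothing makes $w'+\f$ precompact. Here is a concrete counterexample.
\begin{itemize}
\item Let $g_n=\chi\cos(nx)+c_n\chi_1+d_n\chi_2$, with fixed $\chi,\chi_1,\chi_2\in C_c^\infty(\frac12,1)$ and $c_n,d_n\to0$ chosen so that $\int g_n=\int g_n/a_1=0$.
\item Set $w_n'=g_n$, $v_n'=-\kappa_1g_n/a_1$ and $\f_n=\psi_n=0$.
\item Then $U_n\in D(\cA)$ with $T_n\equiv R_n\equiv0$, so $\cA U_n=(v_n,0,0,0)$ and $U_n$ is bounded in the graph norm.
\item Yet $w_n'+\f_n=g_n\rightharpoonup0$ weakly while $\|g_n\|\not\to0$, so $\{U_n\}$ has no convergent subsequence in $\cH$.
\end{itemize}
This is the usual loss of compactness caused by Kelvin--Voigt damping. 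Your eigenvector argument is itself correct: vanishing damped strain forces $U=0$ on $(0,1)$, and continuity of $T$ and $R$ at $x=0$ gives zero Cauchy data for the undamped system on $(-1,0)$. On its own, however, it rules out only eigenvalues on $\ii\mathbb{R}$, not approximate eigenvalues.

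\textbf{How to close it.} The paper gives no proof of this lemma; it refers to Theorem 2.1 of \cite{Tian} and Corollary 2.1 of \cite{Zhao}. You have two options.

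First, present your first route, the approximate-eigenvector contradiction argument. It is the standard device here precisely because the resolvent is not compact. At fixed $\lambda\neq0$ your steps 2--3 go through. In step 4 you need three adjustments:
\begin{itemize}
\item use the flux $T_n$ rather than $w_n'$ as the multiplier in the damped zone, since $w_n''\notin L^2$ there;
\item note that $D_1(v_n'+\psi_n)$ and $D_2\psi_n'$ tend to $0$ in $L^2$ because $D_i$ is bounded;
\item place the transition zone of $q$ inside the region where step 3 gives smallness, not in $(\varepsilon/2,\varepsilon)$.
\end{itemize}

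Second, keep your second route but replace compactness by the Fredholm alternative. Eliminating $v=\ii\lambda w-f_1$ and $\psi=\ii\lambda\f-f_2$ turns $(\ii\lambda-\cA)U=F$ into a variational problem on $H_0^1\times H_0^1$ with sesquilinear form
\[
\int_{-1}^1\big[(\kappa_1+\ii\lambda D_1)(w'+\f)\overline{(\tilde w'+\tilde\f)}+(\kappa_2+\ii\lambda D_2)\f'\overline{\tilde\f'}\big]dx-\lambda^2\int_{-1}^1\big(\rho_1w\overline{\tilde w}+\rho_2\f\overline{\tilde\f}\big)dx .
\]
The first part has coercive real part and the second part is compact. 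Hence injectivity, which is exactly your eigenvector argument, already gives $\ii\lambda\in\rho(\cA)$.
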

In order to prove the main result, we need the following lemma.
\begin{lemma}\label{bt} (\cite{Borichev,Rao})
Let $A\,:\,D(A) \subset H \to H$ generate a bounded $C_0$-semigroup $e^{tA}$ on   Hilbert space  $H$.
Assume that
\begin{equation}
    \label{lem-spec}
 \ii\omega \in   \rho(A), \quad   \forall \; \omega\in \mathbb{R}.
   \end{equation}
Then the semigroup $e^{tA}$ is  polynomially stable of order $\displaystyle  {1\over\beta} $ if and only if
\begin{equation}\label{lem-poly}
\varlimsup\limits_{\omega\in \mathbb{R},|\omega|\to \infty}|\omega|^{-\beta} \big\|(\ii\omega I-A)^{-1}\big\|_{{\cal L}(H)} < \infty.
\end{equation}
\end{lemma}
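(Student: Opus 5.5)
Lemma \ref{bt} is a cited result (Borichev–Tomilov), so we would not reprove it in the paper. If a self-contained argument were wanted, the plan below follows the original proof. Throughout, ``polynomially stable of order $1/\beta$'' means $\|e^{tA}x\|_H\le C t^{-1/\beta}\|x\|_{D(A)}$, that is, $\|e^{tA}A^{-1}\|_{{\cal L}(H)}=O(t^{-1/\beta})$. Note that $A^{-1}$ exists by \eqref{lem-spec} with $\omega=0$. Write $R(\lambda)=(\lambda I-A)^{-1}$.

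\emph{Necessity} (decay implies \eqref{lem-poly}). First use the semigroup property to bootstrap $\|e^{tA}A^{-1}\|=O(t^{-1/\beta})$ to $\|e^{tA}A^{-k}\|=O(t^{-k/\beta})$ for every integer $k$. Choose $k$ with $k/\beta>1$, so that $t\mapsto e^{tA}A^{-k}$ is integrable on $(0,\infty)$. For $\omega\in\mathbb R$ iterate the identity
\[
R(\ii\omega)=(\ii\omega)^{-1}\big(I+AR(\ii\omega)\big),
\]
which expresses $R(\ii\omega)$ as a finite sum of powers of $A^{-1}$ plus a term of the form $(\ii\omega)^{k}R(\ii\omega)A^{-k}$. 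Represent that term by the convergent integral $\int_0^\infty e^{-\ii\omega t}e^{tA}A^{-k}\,dt$. Splitting this integral at $t=\tau$ and optimizing with $\tau\sim|\omega|^{\beta}$ gives $\|R(\ii\omega)\|\le C|\omega|^{\beta}$. This direction is routine and does not use the Hilbert structure.

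\emph{Sufficiency} (\eqref{lem-poly} implies decay). This is the substantive direction.
\begin{enumerate}
\item By a Neumann series, the bound $\|R(\ii\omega)\|\le C(1+|\omega|)^\beta$ extends to a uniform bound on the region $\{\lambda=\xi+\ii\omega:\ 0\le\xi\le c(1+|\omega|)^{-\beta}\}$.
\item Consider the operator $B=R(1)^{\beta}$, defined by fractional powers if $\beta$ is not an integer. Combining step 1 with the resolvent identity shows that $\omega\mapsto R(\ii\omega)B$ is uniformly bounded on $\ii\mathbb R$.
\item Use Plancherel's theorem. For a bounded semigroup on a Hilbert space, $\int_{\mathbb R}\|R(\varepsilon+\ii\omega)x\|^2\,d\omega=2\pi\int_0^\infty e^{-2\varepsilon t}\|e^{tA}x\|^2\,dt\le C\varepsilon^{-1}\|x\|^2$, and the same holds for the adjoint semigroup. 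Via the Laplace inversion formula, these $L^2$ estimates together with the uniform bound from step 2 yield $\|e^{tA}B\|\le C/t$. This is the Gearhart–Prüss mechanism applied to the ``smoothed'' semigroup.
\item Finally, the moment (interpolation) inequality for $A^{-1}$ converts $\|e^{tA}R(1)^\beta\|=O(t^{-1})$ into $\|e^{tA}A^{-1}\|=O(t^{-1/\beta})$.
\end{enumerate}

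The main obstacle is step 3, extracting exactly $t^{-1}$ with no logarithmic loss. On a general Banach space only the Batty–Duyckaerts bound with a logarithmic loss is available. Removing the logarithm is precisely where the Hilbert-space Plancherel estimate, applied to both the semigroup and its adjoint, must be used carefully.
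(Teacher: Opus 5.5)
The paper gives no proof of this lemma; it only cites Borichev--Tomilov and Liu--Rao. Your sufficiency outline is essentially the Borichev--Tomilov Hilbert-space argument: extend the resolvent bound to a region by a Neumann series, bound $R(\lambda)B$, estimate $\langle R(\lambda)^3Bx,y\rangle=\langle R(\lambda)B\,R(\lambda)x,R(\lambda)^*y\rangle$ by Plancherel on $\mathrm{Re}\lambda=1/t$, then apply the moment inequality. Two small corrections there:
\begin{itemize}
\item Step 3 needs $R(\lambda)B$ bounded uniformly on the lines $\mathrm{Re}\lambda=\varepsilon$, i.e.\ on the closed right half-plane. A bound on $\ii\mathbb R$ alone is not enough; step 1 together with Hille--Yosida supplies the half-plane bound.
\item For non-integer $\beta$, the boundedness of $R(\lambda)R(1)^\beta$ comes from the moment inequality, not from the resolvent identity alone.
\end{itemize}

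The necessity direction has a genuine gap. After iterating you have
\[
R(\ii\omega)=-\sum_{j=0}^{k-1}(\ii\omega)^jA^{-j-1}+(\ii\omega)^k\int_0^\infty e^{-\ii\omega t}e^{tA}A^{-k}\,dt .
\]
Split at $\tau=|\omega|^\beta$ and estimate the pieces in norm:
\begin{itemize}
\item The tail is fine: it is at most $C|\omega|^k\tau^{1-k/\beta}=C|\omega|^\beta$.
\item The head is only bounded by $M\|A^{-k}\|\,|\omega|^k\tau\sim|\omega|^{k+\beta}$.
\item The polynomial part has size $|\omega|^{k-1}$, which already exceeds $|\omega|^\beta$ when $k>\beta+1$.
\end{itemize}
No choice of $\tau$ repairs this; you must exploit cancellation. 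Integrate by parts $k$ times on $[0,\tau]$, using $\frac{d}{dt}\big(e^{-\ii\omega t}e^{tA}A^{-j}\big)=e^{-\ii\omega t}e^{tA}A^{1-j}-\ii\omega\, e^{-\ii\omega t}e^{tA}A^{-j}$. This turns the head into three kinds of terms:
\begin{itemize}
\item $\int_0^\tau e^{-\ii\omega t}e^{tA}\,dt$, of norm at most $M\tau$;
\item boundary terms at $t=0$, which cancel the polynomial part exactly;
\item boundary terms $-e^{-\ii\omega\tau}(\ii\omega)^{j-1}e^{\tau A}A^{-j}$, $1\le j\le k$, of norm at most $C|\omega|^{j-1}\tau^{-j/\beta}=O(|\omega|^{-1})$.
\end{itemize}
Only then do you get $\|R(\ii\omega)\|\le C|\omega|^\beta$.

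Simpler still, drop the large $k$ altogether. The identity
\[
R(\ii\omega)=\int_0^\tau e^{-\ii\omega t}e^{tA}\,dt+e^{-\ii\omega\tau}e^{\tau A}A^{-1}\big(\ii\omega R(\ii\omega)-I\big)
\]
gives $\|R(\ii\omega)\|\le M\tau+C\tau^{-1/\beta}\big(|\omega|\,\|R(\ii\omega)\|+1\big)$. Choosing $\tau=(2C|\omega|)^\beta$ lets you absorb the resolvent term on the right and yields $\|R(\ii\omega)\|\le C'|\omega|^\beta$.
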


The main result of this paper can be stated as follows.
\begin{theorem}\label{th-main}
Suppose the coefficient functions $D_1(\cdot),\,D_2(\cdot)$  satisfy  \eqref{H1}-\eqref{H2} and   that one of them vanishes. Then, the   semigroup $e^{t\cA}$ associated with \eqref{evol} is polynomially stable  of order $\frac{1}{2}$, i.e.,
 \begin{equation}
 \|e^{\cA t}U_0\|_\cH\leq\frac{C}{\sqrt t}\|U_0\|_{D(\cA)}.
 \end{equation}
 \end{theorem}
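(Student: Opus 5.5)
By Lemma \ref{th-well}, $e^{t\cA}$ is a contraction semigroup and $\ii\mathbb R\subset\rho(\cA)$. By Lemma \ref{bt} with $\beta=2$, it therefore suffices to prove the resolvent estimate
$$
\|(\ii\omega I-\cA)^{-1}\|_{{\cal L}(\cH)}\le C\omega^2 \quad\text{for } |\omega|\text{ large}.
$$
I argue by contradiction. Suppose there are $\omega_n\to\infty$ and $U_n=(w_n,\phi_n,v_n,\psi_n)\in D(\cA)$ with $\|U_n\|_\cH=1$ and $\omega_n^2(\ii\omega_n-\cA)U_n=F_n\to0$ in $\cH$. I treat the two cases separately. \textbf{Case A} is $D_2\equiv0$ (damping on the shear force); \textbf{Case B} is $D_1\equiv0$ (damping on the bending moment). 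The structure is the same in both; I describe Case A in detail.

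\textbf{Step 1 (dissipation).} Taking the real part of the inner product with $U_n$ gives
$$
\int_0^1 a_1|v_n'+\psi_n|^2\,dx=O(\omega_n^{-2}).
$$
From the first resolvent equation, $v_n'+\psi_n=\ii\omega_n(w_n'+\phi_n)+O(\omega_n^{-2})$. Hence
$$
\int_0^1 a_1|w_n'+\phi_n|^2\,dx=o(\omega_n^{-4}).
$$

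\textbf{Step 2 (the degenerate weight).} Hypothesis (H2)(ii) gives $a_1(x)/x^{\alpha_1}$ nondecreasing, so $a_1(x)\ge c\,x^{\alpha_1}$ with $\alpha_1<1$. Since $\alpha_1<1$, the weight $x^{-\alpha_1}$ is integrable. I will use a weighted Hardy-type inequality, together with the equation $T_n'=\ii\rho_1\omega_n v_n+o$ and the fact that $T_n=\kappa_1(w_n'+\phi_n)+a_1(v_n'+\psi_n)$ is continuous across $x=0$. These give pointwise control: $|T_n|$ is small on $(0,1)$, with $\|T_n\|_{L^\infty(0,1)}=o(1)$ after using $\|v_n\|_{L^2}\le C$ and the smallness of $T_n$ in $L^2$ on subintervals. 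In particular $\kappa_1\|w_n'+\phi_n\|_{L^2(\varepsilon,1)}$ is small, as is the interface value $T_n(0)$. Near $x=0$ I use cut-off multipliers $\eta\in C^\infty$ supported in $(\delta,1)$, because $a_1'$ may blow up at $0$.

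\textbf{Step 3 (propagate to the undamped components on $(0,1)$).} Multiply the first equation by $\eta\,\overline{w_n}$ and use $\|w_n'+\phi_n\|$ small. This gives $\sqrt{\rho_1}\,\|v_n\|_{L^2(\mathrm{supp}\,\eta)}\to0$.

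\textbf{Step 4 (the main obstacle).} Here I must show that $\psi_n,\phi_n'$ are small on $(0,1)$. The only link is the coupling term $T_n$ in the $\phi$-equation, which is $o(1)$ only, while $\phi$ is undamped. This is exactly where the loss $\omega^2$ arises, as in the undegenerate non-smooth case of \cite{Wehbe1}. I plan to rewrite the second equation on $(\delta,1)$ as
$$
\rho_2\omega_n^2\phi_n+\kappa_2\phi_n''=T_n+o(\omega_n^{-2}).
$$
I then use the first equation to express $T_n$ through $v_n$, giving $\omega_n^{-1}T_n'\approx\ii\rho_1 v_n$. Testing with $\eta\,\overline{\phi_n}$ and with $\omega_n^{-1}\eta\,\overline{T_n}$-type multipliers, the small quantity $\omega_n^2\|w_n'+\phi_n\|^2=o(\omega_n^{-2})$ should absorb the powers of $\omega_n$. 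The delicate point is the cross term $\int\eta\,T_n\overline{\phi_n}$. I expect to bound it using $\|\phi_n\|_{L^2}\le C\omega_n^{-1}$, which follows from $\psi_n=\ii\omega_n\phi_n+o$. This yields $\|\psi_n\|+\|\phi_n'\|\to0$ on $(\delta,1)$. Letting $\delta\to0$ is harmless for $\phi_n$, which has no degenerate coefficient; for $w_n$ the smallness near $0$ comes from Step 2.

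\textbf{Step 5 (transfer to $(-1,0)$).} On $(-1,0)$ the system is the conservative Timoshenko system. Using the multipliers $x\overline{w_n'}$ and $x\overline{\phi_n'}$ (more precisely $(x+1)$), I bound the energy on $(-1,0)$ by the boundary traces at $x=0$. These traces are $|v_n(0)|,|\psi_n(0)|,|T_n(0)|,|\kappa_2\phi_n'(0)|$, and they are small by Steps 2–4 via a one-dimensional trace/multiplier argument on $(0,\varepsilon)$. This gives $\|U_n\|_\cH\to0$, a contradiction.

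\textbf{Case B} is symmetric. The roles of $(w,T)$ and $(\phi,R)$ are exchanged, the weight is $a_2\ge cx^{\alpha_2}$, and the coupling term $\kappa_1(w_n'+\phi_n)$ enters the $\phi$-equation as a zeroth-order term. Again the loss of $\omega^2$ arises in transferring smallness to the undamped $w$.
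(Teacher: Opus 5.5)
Your overall frame matches the paper: Lemma \ref{bt} with $\beta=2$, a contradiction sequence, the dissipation bound, $a_1(x)\ge a_1(1)x^{\alpha_1}$, cut-offs away from $x=0$, and multipliers to reach the undamped part. The decisive step, however, is never supplied.

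\textbf{The gap in Step 4.} In Case A everything hinges on Step 4, and the tools you name cannot do the job.
\begin{itemize}
\item Testing $\rho_2\omega_n^2\phi_n+\kappa_2\phi_n''=T_n+o(\omega_n^{-1})$ with $\eta\overline{\phi_n}$ only gives the equipartition relation $\rho_2\int\eta|\omega_n\phi_n|^2-\kappa_2\int\eta|\phi_n'|^2=o(1)$.
\item The cross term $\int\eta T_n\overline{\phi_n}$ that you call delicate is trivially $o(\omega_n^{-2})$, since $\|T_n\|_{L^2(\delta,1)}=o(\omega_n^{-1})$ and $\|\phi_n\|=O(\omega_n^{-1})$.
\item A multiplier like $\omega_n^{-1}\eta\overline{T_n}$ only produces terms that are already $o(1)$ by Steps 1--3.
\item Testing with $\eta\overline{w_n'}$ just reproduces the equipartition, because $w_n'=-\phi_n+o(\omega_n^{-2})$ in $L^2(\mathrm{supp}\,\eta)$.
\end{itemize}
So nothing in your plan forces $\omega_n\phi_n\to0$.

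\textbf{The missing idea.} This is the paper's Lemma \ref{le3}: test the \emph{$w$-equation} \eqref{ps3} with $p_2\overline{\phi_n'}$.
\begin{itemize}
\item Integrate by parts and write $v_n'=(v_n'+\psi_n)-\psi_n$ and $\psi_n=\ii\omega_n\phi_n+o(\omega_n^{-2})$. Using the interior smallness of $v_n$ from your Step 3, the term $\ii\rho_1\omega_n\int p_2v_n\overline{\phi_n'}$ becomes $-\rho_1\int p_2|\omega_n\phi_n|^2+o(1)$.
\item The other term, $\int T_n'p_2\overline{\phi_n'}=-\int T_n(p_2\overline{\phi_n'})'$, is bounded by $C\|T_n\|_{L^2(\mathrm{supp}\,p_2)}(\|\phi_n''\|+\|\phi_n'\|)=o(\omega_n^{-\beta/2})\,O(\omega_n)$. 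Here $\|\phi_n''\|=O(\omega_n)$ is read off from the $\phi$-equation.
\end{itemize}
This bound is $o(1)$ exactly when $\beta=2$; that is where the $\omega^2$ loss really sits. Only after this step does the equipartition give $\phi_n'$ small.

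\textbf{Case B.} Case B also needs its own version of this step and is not a literal mirror image.
\begin{itemize}
\item The paper first uses $\psi_n(1)=0$ and the bound $|\psi_n(\tau)|^2\le\int_0^1a_2|\psi_n'|^2\int_\tau^1a_2^{-1}$. Because the damping acts on $\psi_n'$ alone, this makes $\|\omega_n\phi_n\|_{L^2(0,1)}$ and $\|\phi_n'\|_{L^2(0,1)}$ small all the way up to $x=0$.
\item The transfer to $w$ then uses the cross multiplier $p_2\overline{w_n'}$ on the $\phi$-equation. It balances $\|w_n''\|=O(\omega_n)$ against $\|R_n\|_{L^2(0,1)}=o(\omega_n^{-1})$.
\end{itemize}

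\textbf{Step 5.} Step 5 glosses over another point. It needs pointwise smallness of $v_n(0)$, $\psi_n(0)$, $T_n(0)$ and $\phi_n'(0)$. Steps 2--4 only give smallness on $(\delta,1-\delta)$ for each fixed $\delta$, and ``letting $\delta\to0$ is harmless'' is asserted, not shown. The paper avoids traces altogether. It applies $h\overline{\phi_n'}$ and $h\overline{S_n}$ (resp.\ $h\overline{w_n'}$ and $h\overline{R_n}$) on all of $(-1,1)$, with $h=(x+1)q_1+(x-1)q_2$. The degenerate coefficient then enters only through $a_1^{1/2}(v_n'+\psi_n)$, which is small. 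Hence the energy on $[-1,\frac{7\varepsilon}{4}]\cup[1-\frac{7\varepsilon}{4},1]$ is controlled by the interior energy.

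\textbf{Two smaller slips.}
\begin{itemize}
\item (H2)(ii) makes $a_1/x^{\alpha_1}$ non\emph{increasing}, not nondecreasing; that is what yields $a_1\ge a_1(1)x^{\alpha_1}$.
\item Your justification of $\|T_n\|_{L^\infty(0,1)}=o(1)$ via $\|v_n\|_{L^2}\le C$ does not close near $x=0$. The averaging argument needs $\sup_{[0,1]}|v_n|\le C$, which follows from $\|a_1^{1/2}v_n'\|_{L^2(0,1)}=O(1)$ and $a_1^{-1}\in L^1(0,1)$.
\end{itemize}
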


\section{Polynomial Stability Analysis}
\setcounter{equation}{0} \setcounter{theorem}{0}
\setcounter{lemma}{0} \setcounter{definition}{0}
\setcounter{proposition}{0} \setcounter{remark}{0}
\setcounter{corollary}{0} \setcounter{equation}{0}

In this section, we will prove the  Theorem \ref{th-main}. From Lemma \ref{bt}, this is equivalent to showing that
there exists constant $r>0$ such that
\begin{equation}\label{huang2}
 \inf\limits_{\|U\|_{\cal H}=1,\: \o\in{\mathbb R} } \o^\beta\|\ii\o U-{\cal A}U\|_{\cal H}  \ge r,
\end{equation}
with the parameter $\beta=2$.

Suppose that \eqref{huang2} fails. Then
 there exists   $\{ \o_n,  U_n  \}_{n=1}^\infty
\subset {\mathbb R} \times D({\cal A}) $ with
\begin{equation}\label{unitnorm}
\begin{array}{l}
\|U_n\|_{\cal H}  =\|  (w_n,\phi_n, v_n,\psi_n)\|_{\cal H}  =1
\;  \mbox{ and  }  \;
\o_n\to \infty\end{array},
\end{equation}
such that
 \begin{equation}
  \label{ps00}
  \o_n^{\beta}\|\ii\o_n U_n- \cA U_n\|_{\cH} =o(1),
\end{equation}
which is equivalent to
\begin{subequations}
\begin{align}
\label{ps1}
   &f_{1n}  \doteq  \omega_n^\beta (\ii  \omega_n w_{n} -  v_{n}) =o(1)& &\mbox{in}\;\; H^1_{0}(-1,\,1),
   \\ \noalign{\medskip} \label{ps2}
   & f_{2n}  \doteq  \omega_n^\beta( \ii  \omega_n \phi_{n} -  \psi_{n} )=o(1)& &\mbox{in}\;\; H^1_{0}(-1,\,1),
 \\ \noalign{\medskip} \label{ps3}
 &f_{3n}  \doteq  \omega_n^\beta ( \ii  \omega_n v_{n} -  \rho_1^{-1} T_n' )=o(1) & &\mbox{in}\;\; L^2(-1,\,1),
 \\ \noalign{\medskip} \label{ps4}
 &f_{4n} \doteq  \omega_n^\beta  ( \ii  \omega_n \psi_n - \rho_2^{-1}(R_n' -T_n)) =o(1) & &\mbox{in}\;\;  L^2(-1,\,1),
\end{align}
\end{subequations}
where
$$
T_n =   \kappa_1(w_n' +  \phi_n) +D_{1}( v_n' + \psi_n) ,  \qquad
R_n =  \kappa_2\, \phi_n' +D_{2} \psi_n' .
$$
Substituting \eqref{ps1} into \eqref{ps3} and \eqref{ps2} into \eqref{ps4}, respectively,
we deduce that
\begin{align}
\label{ma12}
 &     -\omega_n^2 w_{n} -  \rho_1^{-1} T_n' =\ii\o_n^{1-\b}f_{1n}+\o_n^{-\b}f_{3n} & &\mbox{in}\;\; L^2(-1,\,1),
 \\ \noalign{\medskip} \label{ma13}
 &  -\omega_n^2 \phi_n - \rho_2^{-1}(R_n' -T_n) =\ii\o_n^{1-\b}f_{2n}+\o_n^{-\b}f_{4n} & &\mbox{in}\;\;  L^2(-1,\,1).
\end{align}
Furthermore, by the dissipativeness of  operator $\cA$  and  \eqref{ps00},
\begin{equation}
\label{ps50}
 \o_n^\b \Re\langle (\ii\o_n I - \cA)U_n, U_n\rangle_{\cal H}
=\o_n^\b \big(
\|D_{1}^{1\over2}(v'_n+ \psi_n)\|_{L^2(-1,1)}^2+ \|D_{2}^{1\over2}\psi_n'\|_{L^2(-1,1)}^2\big)
=o(1).
\end{equation}
Then, from \eqref{H1} and \eqref{ps50}, we conclude that
\begin{equation}
 \label{ps5}
 \|a_1^{1\over2} (v'_n+\psi_n) \|_{L^2(0,1)}, \quad
 \|a_2^{1\over2}\psi_n'\|_{L^2(0,1)} =\o_n^{-{\b\over2}} o(1).
 \end{equation}
It is clear from \eqref{ps1}, \eqref{ps2} and \eqref{ps5} that
\begin{equation}
 \label{ps6}
 \|a_1^{1\over2} (w'_n+\phi_n) \|_{L^2(0,1)}, \quad
 \|a_2^{1\over2}\phi_n'\|_{L^2(0,1)} =\o_n^{-1-{\b\over2}} o(1).
 \end{equation}
Furthermore, from \eqref{unitnorm}, it is easy to see that
\begin{equation}\label{bounded}
\|w_n'\|_{L^2(-1,1)},\;\|v_n\|_{L^2(-1,1)},\;\|\f_n'\|_{L^2(-1,1)},\;\|\psi_n\|_{L^2(-1,1)}=\cO(1),
\end{equation}
which together with \eqref{ps1}, \eqref{ps2} and the fact that $\omega_n\to \infty,$ yields
\begin{equation}\label{o1}
\|w_n\|_{L^2(-1,1)},\quad \|\phi_n\|_{L^2(-1,1)}
=o(1).
\end{equation}

 For any $\displaystyle0<\varepsilon<\frac{1}{10}$, we introduce the following cut-off functions
{\color{black}
\begin{enumerate}
\item[{\rm (i)}] $\displaystyle p_i(x)\in  C^1\left[\frac{\varepsilon}{4},1-\frac{\varepsilon}{4}\right],\, i=1,2,3$ such that $0\leq p_i(x)\leq1$ and
  \begin{equation}
  p_i(x)=
   \left\{\begin{array}{lcl}
  1 &\mbox{if}&\displaystyle  x\in \Big[\frac{(2i+1)\varepsilon}{4},
  1-\frac{(2i+1)\varepsilon}{4}\Big],
\\ \noalign{\medskip}   \displaystyle
  0 &\mbox{if}&\displaystyle x\in \Big[\frac{\varepsilon}{4},\frac{(2i-1)\varepsilon}{4}
  \Big]\cup \Big[1-\frac{(2i-1)\varepsilon}{4},1-\frac{\varepsilon}{4}\Big].
 \end{array}\right.
   \end{equation}

\item[{\rm (ii)} ] $q_j(x)\in C^1[-1,1]$ such that $0\leq q_j(x)\leq1$ ($j=1,2$) and
   \begin{equation}
   \begin{array}{lcl}\displaystyle
 q_1(x)=
   \left\{\begin{array}{lcl}
  1 &\mbox{if}&\displaystyle  x\in\Big[-1,\frac{7\varepsilon}{4}\Big],
\\ \noalign{\medskip}   \displaystyle
  0 &\mbox{if}&\displaystyle  x\in\Big[1-\frac{7\varepsilon}{4},1\Big].
 \end{array}\right.
 \;\;\;
  q_2(x)=
   \left\{\begin{array}{lcl}
  0 &\mbox{if}&\displaystyle  x\in\Big[-1,\frac{7\varepsilon}{4}\Big],
\\ \noalign{\medskip}   \displaystyle
  1 &\mbox{if}& \displaystyle x\in\Big[1-\frac{7\varepsilon}{4},1\Big].
 \end{array}\right.
 \end{array}
 \end{equation}
 \end{enumerate}}

\subsection{The case $a_1(x)>0,\;a_2(x)=0$}
In this subsection, we   analyze the stability of the system \eqref{system} with $a_1(x)>0$ and $a_2(x)=0$.

\begin{lemma}\label{le2}
If $a_2(x)=0$, the following  holds
\begin{equation}\label{ma1}
\int_{\frac{5\varepsilon}{4}}
^{1-\frac{5\varepsilon}{4}}|w_n^{\prime}|^2dx=o(1),\quad
\int_{\frac{3\varepsilon}{4}}^{1-\frac{3\varepsilon}{4}}|\o_nw_n|^2dx=o(1).
\end{equation}
\end{lemma}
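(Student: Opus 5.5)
On every compact subinterval $[\delta,1-\delta]$ with $\delta>0$, $a_1$ is bounded below by a positive constant $c_\varepsilon$, since $a_1$ is continuous and positive on $(0,1]$. I will combine this with the dissipation estimates \eqref{ps5}--\eqref{ps6}, taking $\beta=2$ throughout. On $[\varepsilon/4,1-\varepsilon/4]$ they give
\begin{equation*}
\|v_n'+\psi_n\|_{L^2(\frac{\varepsilon}{4},1-\frac{\varepsilon}{4})}=\o_n^{-1}o(1),\qquad
\|w_n'+\phi_n\|_{L^2(\frac{\varepsilon}{4},1-\frac{\varepsilon}{4})}=\o_n^{-2}o(1).
\end{equation*}
Since $\|\phi_n\|_{L^2}=o(1)$ by \eqref{o1}, the triangle inequality yields $\int_{\varepsilon/4}^{1-\varepsilon/4}|w_n'|^2dx=o(1)$. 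This already contains the first estimate in \eqref{ma1}, on the smaller interval. The point of the lemma is therefore the second estimate, for $\o_n w_n$.

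For that I will use the multiplier $p_1\overline{w_n}$ in \eqref{ma12}. Note that $p_1$ vanishes near $\varepsilon/4$ and $1-\varepsilon/4$, and equals $1$ on $[3\varepsilon/4,1-3\varepsilon/4]$. I multiply \eqref{ma12} by $\rho_1p_1\overline{w_n}$, integrate over $(\varepsilon/4,1-\varepsilon/4)$, and integrate the $T_n'$ term by parts; no boundary terms appear. This gives
\begin{equation*}
\int \rho_1 p_1|\o_nw_n|^2dx = -\int T_n\big(p_1\overline{w_n}\big)'dx + \text{(remainder terms)}.
\end{equation*}
The remainder terms are bounded by $\|f_{1n}\|_{L^2}\,\|\o_n^{-1}w_n\|_{L^2}$ and $\o_n^{-2}\|f_{3n}\|_{L^2}\|w_n\|_{L^2}$, both $o(1)$.

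The hard step is estimating $T_n$ in $L^2(\varepsilon/4,1-\varepsilon/4)$. The elastic part $\kappa_1(w_n'+\phi_n)$ is $\o_n^{-2}o(1)$. The viscous part satisfies $\|a_1(v_n'+\psi_n)\|\le \|a_1\|_\infty^{1/2}\|a_1^{1/2}(v_n'+\psi_n)\| = \o_n^{-1}o(1)$ by \eqref{ps5}. So $\|T_n\|_{L^2(\varepsilon/4,1-\varepsilon/4)}=\o_n^{-1}o(1)$. Moreover $(p_1\overline{w_n})'=p_1'\overline{w_n}+p_1\overline{w_n'}$ has $L^2$ norm $O(1)$, since $\|w_n\|_{L^2}=o(1)$ and $\|w_n'\|_{L^2}=O(1)$. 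Hence $\int T_n(p_1\overline{w_n})'=o(1)$; this bound does not even need the extra factor $\o_n^{-1}$. Since $p_1=1$ on $[3\varepsilon/4,1-3\varepsilon/4]$, we get $\int_{3\varepsilon/4}^{1-3\varepsilon/4}|\o_nw_n|^2dx=o(1)$, which is the second part of \eqref{ma1}.

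Finally, the first part of \eqref{ma1} on $[5\varepsilon/4,1-5\varepsilon/4]$ is contained in the direct bound above. If one prefers to follow the cut-off hierarchy, it can also be recovered by multiplying \eqref{ma12} with $p_2\overline{w_n}$. In that version, $T_n$ is again replaced by $\kappa_1 w_n'$ up to small errors, and the term $\int p_2|\o_nw_n|^2$ is controlled by the previous step.
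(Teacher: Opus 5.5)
Your proof is correct. For the $\omega_n w_n$ estimate it follows essentially the paper's route, and for the $w_n'$ estimate it takes a genuinely shorter one.

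\textbf{The $\omega_n w_n$ estimate.} The paper tests \eqref{ps3} against $-\mathrm{i}\,\omega_n^{-1-\beta}\rho_1 p_1\overline{v_n}$ over $(\varepsilon/4,1-\varepsilon/4)$. Via \eqref{ps1}, this is (up to sign and negligible errors) your multiplier $\rho_1 p_1\overline{w_n}$ in \eqref{ma12}. In both versions the integrated-by-parts $T_n$ term is then shown to be $o(1)$. The only difference is what $T_n$ is paired with:
\begin{itemize}
\item the paper pairs $T_n$ with $p_1\overline{v_n'}$, so it needs the auxiliary bound $\|a_1^{1/2}v_n'\|_{L^2(0,1)}=\mathcal{O}(1)$ from \eqref{ma41};
\item you pair it with $(p_1\overline{w_n})'$, so you only need the energy bound on $w_n'$ together with $\|T_n\|_{L^2(\varepsilon/4,1-\varepsilon/4)}=\omega_n^{-1}o(1)$.
\end{itemize}
A small imprecision: by its definition, $p_1$ vanishes only at the endpoints $\varepsilon/4$ and $1-\varepsilon/4$, not on a neighbourhood of them. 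That is still all the integration by parts needs.

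\textbf{The $w_n'$ estimate.} The paper derives it from the $\phi$-equation \eqref{ps4}, tested against $p_2\overline{(w_n'+\phi_n)}$. This requires controlling $\omega_n\psi_n$ and $\phi_n''$ against $a_1^{1/2}(w_n'+\phi_n)$, using $a_1^{-1/2}$-weighted norms on the interior interval. You instead read it off \eqref{ps6} directly, using $\min_{[\varepsilon/4,1-\varepsilon/4]}a_1>0$ and $\|\phi_n\|_{L^2}=o(1)$ from \eqref{o1}.

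The paper's computation also silently relies on $a_1$ being bounded below on the interior (that is what makes those $a_1^{-1/2}$ weights harmless), so your shortcut loses nothing. It actually gives more: $\|w_n'+\phi_n\|_{L^2(\varepsilon/4,1-\varepsilon/4)}=\omega_n^{-2}o(1)$, and $w_n'=o(1)$ on the larger interval $(\varepsilon/4,1-\varepsilon/4)$.

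The paper's detour only mirrors the structure of Lemma~\ref{le7}. There the damping acts on $\phi$, so $w_n'$ genuinely has to be recovered through a multiplier. Here the dissipation controls $w_n'+\phi_n$ directly on the damped region. Your closing alternative via $p_2\overline{w_n}$ is therefore unnecessary.
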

\noindent{\bf Proof.} First,   from \eqref{ps5} and $\|\psi_n\|_{L^2(0,1)}$ is bounded, one has
\begin{equation}\label{ma41}
\|a_1^{\frac{1}{2}}v_n^{\prime}\|_{L^2(0,1)}=\cO(1).
\end{equation}
Multiplying \eqref{ps3} with $-\ii\omega_n^{-1-\b}\rho_1p_1\ol{v_n}$ and integrating over {\color{black}$(\frac{\varepsilon}{4},1-\frac{\varepsilon}{4})$}, we arrive at
\begin{equation}\label{ma2}
\rho_1\int_{\frac{\varepsilon}{4}}^{1-\frac{\varepsilon}{4}}p_1|v_n|^2dx+
\ii\omega_n^{-1}\int_{\frac{\varepsilon}{4}}^{1-\frac{\varepsilon}{4}}\big[\kappa_1(w_n^{\prime}+\phi_n)+a_1(v_n^{\prime}+\psi_n)\big]^{\prime}p_1\ol{v_n}dx
=\o_n^{-1-\b}o(1).
\end{equation}
By  \eqref{ps5}-\eqref{o1}, \eqref{ma41},
  we get
\begin{equation}\label{ma3}
\begin{array}{lcl}
 &&\displaystyle
\ii\omega_n^{-1}\int_{\frac{\varepsilon}{4}}^{1-\frac{\varepsilon}{4}}\big[\kappa_1(w_n^{\prime}+\phi_n)+a_1(v_n^{\prime}+\psi_n)\big]^{\prime}
p_1\ol{v_n}dx
 \\ \noalign{\medskip} &=&\displaystyle
-\ii\omega_n^{-1}\int_{\frac{\varepsilon}{4}}^{1-\frac{\varepsilon}{4}}\big[\kappa_1(w_n^{\prime}+\phi_n)+a_1(v_n^{\prime}+\psi_n)\big]
(p_1^{\prime}\ol{v_n}+p_1\ol{v_n^{\prime}})dx
\\ \noalign{\medskip} &\leq&\displaystyle
C|\omega_n^{-1}|\big[\|w_n^{\prime}+\phi_n\|_{L^2(0,1)}\|v_n\|_{L^2(0,1)}
+\|a_1^{-\frac{1}{2}}(w_n^{\prime}+\phi_n)\|_{L^2(\frac{\varepsilon}{4},1-\frac{\varepsilon}{4})}\|a_1^{\frac{1}{2}}v_n^{\prime}\|_{L^2(0,1)}
\\ \noalign{\medskip} &&+\displaystyle
\|a_1^{\frac{1}{2}}(v_n^{\prime}+\psi_n)\|_{L^2(0,1)}\big(\|a_1^{\frac{1}{2}}v_n\|_{L^2(0,1)}
+\|a_1^{\frac{1}{2}}v_n^{\prime}\|_{L^2(0,1)}\big)\big]=o(1).
\end{array}
\end{equation}
Thus, substituting \eqref{ma3} into \eqref{ma2}, and using the denfition of $p_1(x)$, yields
\begin{equation}\label{0}
\int_{\frac{3\varepsilon}{4}}^{1-\frac{3\varepsilon}{4}}|v_n|^2dx=o(1).
\end{equation}
The second estimation in \eqref{ma1} is clear from \eqref{ps1} and \eqref{0}.

Now, multiplying \eqref{ps4} by $\o_n^{-\b}\rho_2p_2\ol{(w_n^{\prime}+\phi_n)}$ and integrating over {\color{black}$\big(\frac{3\varepsilon}{4},1-\frac{3\varepsilon}{4}\big)$}, we obtain
\begin{equation}
\int_{\frac{3\varepsilon}{4}}^{1-\frac{3\varepsilon}{4}}\big[\ii\omega_n\rho_2\psi_n-(\kappa_2\phi_n^{\prime})^{\prime}
+\kappa_1(w_n^{\prime}+\phi_n)+a_1(v_n^{\prime}+\psi_n)\big]p_2\ol{(w_n^{\prime}+\phi_n)}dx=\o_n^{-1-\b}o(1).
\end{equation}
Using \eqref{ps5}, $\|w_n^{\prime}+\phi_n\|_{L^2(-1,1)}$ is bounded (see \eqref{bounded}) and the definition of $p_2$, we get
\begin{equation}\label{1}
\int_{\frac{3\varepsilon}{4}}^{1-\frac{3\varepsilon}{4}}\big[\ii\omega_n\rho_2\psi_np_2\ol{(w_n^{\prime}+\phi_n)}
-\kappa_2\phi_n^{\prime\prime}p_2\ol{(w_n^{\prime}+\phi_n)}+
\kappa_1p_2|w_n'+\phi_n|^2\big]dx
=\o_n^{-1-\b}o(1).
\end{equation}
Moreover, since $\|\psi_n\|_{L^2(-1,1)},\;\o_n^{-1}\|\phi_n^{\prime\prime}\|_{L^2(-1,1)}$
is  bounded due to \eqref{ps4} and \eqref{bounded}, using \eqref{H2}, \eqref{ps6} yields
\begin{equation}
\ii\omega_n\rho_2\int_{\frac{3\varepsilon}{4}}^{1-\frac{3\varepsilon}{4}}\psi_np_2\ol{(w_n^{\prime}+\phi_n)}dx
\le C|\omega_n|\|a_1^{\frac{1}{2}}(w_n^{\prime}+\phi_n)\|_{L^2(0,1)}\|a_1^{-\frac{1}{2}}\psi_n\|_{L^2(\frac{3\varepsilon}{4},1-\frac{3\varepsilon}{4})}
=\o_n^{-\frac{\b}{2}}o(1),
\end{equation}
and
\begin{equation}
\kappa_2\int_{\frac{3\varepsilon}{4}}^{1-\frac{3\varepsilon}{4}}\phi_n^{\prime\prime}p_2\ol{(w_n^{\prime}+\phi_n)}dx
\le
C\|a_1^{\frac{1}{2}}(w_n^{\prime}+\phi_n)\|_{L^2(0,1)}
\|a_1^{-\frac{1}{2}}\phi_n^{\prime\prime}\|_{L^2(\frac{3\varepsilon}{4},1-\frac{3\varepsilon}{4})}
=\o_n^{-\frac{\b}{2}}o(1),
\end{equation}
Combining these  with \eqref{1} and the definition of $p_2$, we have
\begin{equation}
\int_{\frac{5\varepsilon}{4}}^{1-\frac{5\varepsilon}{4}}|w_n^{\prime}+\phi_n|^2dx=o(1).
\end{equation}
From the above estimation and \eqref{o1}, we obtain
 \begin{equation}
 \int_{\frac{5\varepsilon}{4}}^{1-\frac{5\varepsilon}{4}}|w_n^{\prime}|^2dx
 \le
 2\int_{\frac{5\varepsilon}{4}}^{1-\frac{5\varepsilon}{4}}|w_n^{\prime}+\phi_n|^2dx
 +2\int_{\frac{5\varepsilon}{4}}^{1-\frac{5\varepsilon}{4}}|\phi_n|^2dx=o(1),
 \end{equation}
which completes the proof of Lemma \ref{le2}.\hfill$\Box$

\begin{lemma} \label{le3}For $a_2(x)=0$, one has the following estimates
\begin{equation}\label{ma4}
\int_{\frac{5\varepsilon}{4}}^{1-\frac{5\varepsilon}{4}}|\omega_n\phi_n|^2dx,
\quad\int_{\frac{7\varepsilon}{4}}^{1-\frac{7\varepsilon}{4}}|\phi_n^{\prime}|^2dx=o(1).
\end{equation}
\end{lemma}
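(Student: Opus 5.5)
We need $\int_{5\varepsilon/4}^{1-5\varepsilon/4}|\o_n\phi_n|^2dx=o(1)$ and $\int_{7\varepsilon/4}^{1-7\varepsilon/4}|\phi_n'|^2dx=o(1)$ when $a_2=0$. The plan is to extract the $\phi$-information from the shear equation \eqref{ma12}, where the damping lives, and then use the $\phi$-equation \eqref{ma13} with a cut-off multiplier to obtain $\phi_n'$.

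\textbf{Step 1: the estimate for $\o_n\phi_n$.} On $(\frac{\varepsilon}{4},1-\frac{\varepsilon}{4})$ we have $a_1\ge c_\varepsilon>0$. Hence \eqref{ps6} with $\beta=2$ gives
$$\|w_n'+\phi_n\|_{L^2(\frac{\varepsilon}{4},1-\frac{\varepsilon}{4})}=\o_n^{-2}o(1),$$
that is, $\|\o_n(w_n'+\phi_n)\|_{L^2(\frac{\varepsilon}{4},1-\frac{\varepsilon}{4})}=\o_n^{-1}o(1)$. It therefore suffices to show $\int_{5\varepsilon/4}^{1-5\varepsilon/4}|\o_n w_n'|^2dx=o(1)$. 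Pointwise bounds on $w_n'$ are not directly available, so I would instead multiply \eqref{ma12} by $p_2\ol{w_n}$, or better by $p_2\o_n^{0}\ol{w_n}$ after differentiating, and integrate over $(\frac{3\varepsilon}{4},1-\frac{3\varepsilon}{4})$. After integrating by parts, the damping term $a_1(v_n'+\psi_n)$ is controlled by \eqref{ps5} together with $a_1\ge c_\varepsilon$, giving $\|v_n'+\psi_n\|=\o_n^{-1}o(1)$ there. A cleaner route is the following. From \eqref{ps1}–\eqref{ps2}, $v_n'+\psi_n=\ii\o_n(w_n'+\phi_n)-\o_n^{-2}(f_{1n}'+f_{2n})$. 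Consequently $\|v_n'+\psi_n\|_{L^2(\frac{\varepsilon}{4},1-\frac{\varepsilon}{4})}=\o_n^{-1}o(1)$. Since $\psi_n=\ii\o_n\phi_n-\o_n^{-2}f_{2n}$, it is enough to control $\|v_n'\|$ on the subinterval. For this, multiply \eqref{ps3} by $p_2\ol{v_n'}$-type multipliers. Equivalently, use \eqref{ma12}: $T_n'=-\rho_1\o_n^2w_n+o(1)$ in $L^2$, while $T_n$ is $\o_n^{-1}o(1)$ in $L^2(\frac{\varepsilon}{4},1-\frac{\varepsilon}{4})$. Together with the second estimate of Lemma \ref{le2}, which gives $\o_n w_n=o(1)$ on $(\frac{3\varepsilon}{4},1-\frac{3\varepsilon}{4})$, this yields $\o_n^{-1}T_n'=o(1)$ there. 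Then $\psi_n$ is recovered from the $\phi$-equation \eqref{ps4} as follows: test \eqref{ps4} against $\o_n^{-2}p_2\ol{\psi_n}$ and integrate by parts in the $\phi_n''$ term, giving
$$\rho_2\int p_2|\psi_n|^2-\ii\o_n^{-1}\kappa_2\int\phi_n'\big(p_2\ol{\psi_n}\big)'+\ii\o_n^{-1}\int T_np_2\ol{\psi_n}=o(1).$$
Using $\ol{\psi_n'}=-\ii\o_n\ol{\phi_n'}+\o_n^{-2}\ol{f_{2n}'}$, the middle term becomes $-\kappa_2\int p_2|\phi_n'|^2$ plus lower-order terms (with $p_2'$ terms of size $\o_n^{-1}$). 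This identity therefore links the two quantities in \eqref{ma4}, and a second relation is needed.

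\textbf{Step 2: the second relation, obtained from $T_n$.} Since $T_n=\o_n^{-1}o(1)$ and $T_n'=\o_n o(1)$ on $(\frac{3\varepsilon}{4},1-\frac{3\varepsilon}{4})$, and $\kappa_1(w_n'+\phi_n)$ is small there, multiplying \eqref{ma12} by $\o_n^{-1}p_2\ol{\phi_n}\o_n$ and integrating by parts gives terms $\int T_n p_2\ol{\phi_n'}$ that are small. This, however, only bounds $\phi_n$ weakly. The decisive idea is instead to use $w_n'=-\phi_n+\o_n^{-2}o(1)$ on the damped interval. Then $\o_n\phi_n=-\o_n w_n'+o(1)$, and $\o_n w_n'$ is controlled by differentiating: $\int p_2|\o_n w_n'|^2=-\int \o_n^2 w_n (p_2\ol{w_n'})'$. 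Here $\ol{w_n''}=-\ol{\phi_n'}+\o_n^{-2}(\cdots)'$, and $(\cdots)'$ is controlled by $T_n'$ and the bound $\|a_1^{1/2}(v_n'+\psi_n)'\|$, which follows since $a_1'$ is bounded on the subinterval. The result is
$$\int p_2|\o_n w_n'|^2\le C\|\o_nw_n\|_{L^2(\frac{3\varepsilon}{4},1-\frac{3\varepsilon}{4})}\big(\o_n\|\phi_n'\|+\o_n\|w_n'\|\big),$$
whose right-hand side is $o(1)\cdot\o_n$. This is not sufficient, and is the main obstacle: balancing powers of $\o_n$. To overcome it I would use Lemma \ref{le2} in its stronger quantitative form, namely \eqref{0} improved to $\o_n^{-1}$ precision by retaining the $\o_n^{-1-\beta}$ factors, so that the product $\o_n\|\o_nw_n\|$ still tends to $0$.

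\textbf{Step 3: the estimate for $\phi_n'$.} Once $\int_{5\varepsilon/4}^{1-5\varepsilon/4}|\o_n\phi_n|^2=o(1)$ holds, the identity of Step 1 with the cut-off $p_3$ (supported where $p_2\equiv1$) gives $\kappa_2\int p_3|\phi_n'|^2=\rho_2\int p_3|\o_n\phi_n|^2+o(1)=o(1)$. This proves the second estimate of \eqref{ma4} on $(\frac{7\varepsilon}{4},1-\frac{7\varepsilon}{4})$.
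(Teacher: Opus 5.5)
\textbf{There is a genuine gap in the argument for the first estimate, although it can be repaired.}

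Your Step 3 is the paper's second step: the identity from \eqref{ps4} with the cut-off $p_3$. Step 1 ends without a conclusion. On the damped interval, $\|v_n'\|$ and $\|\omega_n\phi_n\|$ differ by $o(1)$, so that route is circular. The decisive inequality in Step 2 comes from integrating $\int p_2|\omega_n w_n'|^2$ by parts onto $w_n''$ and then appealing to a bound on $\|a_1^{1/2}(v_n'+\psi_n)'\|$. No such bound is available.
\begin{itemize}
\item Equation \eqref{ps3} controls only the combination $T_n'=[\kappa_1(w_n'+\phi_n)+a_1(v_n'+\psi_n)]'$, not $(w_n'+\phi_n)'$ separately.
\item Nothing controls second derivatives of $v_n$.
\item On the damped region $w_n''$ is not even known to lie in $L^2$, and boundedness of $a_1'$ does not help.
\end{itemize}
In the paper, second derivatives are used only for $\phi_n$, where $a_2=0$ makes $\|\phi_n''\|_{L^2}=O(\omega_n)$ legitimate via \eqref{ps4}. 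In addition, the sharpened form of \eqref{0} on which your conclusion rests is only asserted. It does not come from the factor $\omega_n^{-1-\beta}$ on the right of \eqref{ma2}.

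\textbf{Repair.} Both points can be fixed with facts you already wrote down.
\begin{itemize}
\item Since $a_1\ge c_\varepsilon>0$ on $(\frac{\varepsilon}{4},1-\frac{\varepsilon}{4})$, \eqref{ps5}--\eqref{ps6} give $\|T_n\|_{L^2(\frac{\varepsilon}{4},1-\frac{\varepsilon}{4})}=\omega_n^{-1}o(1)$.
\item Insert this into the integration by parts of \eqref{ma2}, using $\|v_n'\|_{L^2(\frac{\varepsilon}{4},1-\frac{\varepsilon}{4})}=O(1)$. This gives $\int p_1|v_n|^2dx=\omega_n^{-2}o(1)$, hence $\|\omega_n w_n\|_{L^2(\frac{3\varepsilon}{4},1-\frac{3\varepsilon}{4})}=\omega_n^{-1}o(1)$.
\item Then never differentiate $w_n'$. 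Write $\phi_n=(w_n'+\phi_n)-w_n'$ in one factor and move the derivative onto $\phi_n$:
\[
\int p_2|\omega_n\phi_n|^2dx=\int p_2\,\omega_n(w_n'+\phi_n)\,\overline{\omega_n\phi_n}\,dx+\int \omega_n w_n\big(p_2'\,\overline{\omega_n\phi_n}+p_2\,\omega_n\overline{\phi_n'}\big)dx .
\]
\item The first integral is $\omega_n^{-1}o(1)$. The second is at most $\|\omega_n w_n\|_{L^2(\frac{3\varepsilon}{4},1-\frac{3\varepsilon}{4})}\,O(\omega_n)=o(1)$, and this is exactly where $\beta=2$ is critical.
\end{itemize}

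\textbf{Comparison with the paper.} Once repaired, your route is a genuine alternative. The paper instead multiplies \eqref{ps3} by $p_2\overline{\phi_n'}$. This turns $\ii\omega_n\rho_1\int p_2 v_n\overline{\phi_n'}$ into $-\rho_1\int p_2|\omega_n\phi_n|^2$ via $v_n'\approx-\psi_n$ on the damped region. The $T_n'$-term is then killed by $\|T_n\|=\omega_n^{-1}o(1)$ against $\|\phi_n''\|=O(\omega_n)$. The paper therefore needs only the weak bound \eqref{0}, but it uses $a_2=0$ through $\phi_n''$. Your repaired version needs the sharpened bound on $v_n$, but only $\|\phi_n'\|=O(1)$.
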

\noindent{\bf Proof.} Multiplying \eqref{ps3} by $\o_n^{-\beta}\rho_1p_2\ol{\phi_n^{\prime}}$ and integrating over {\color{black}$\big(\frac{3\varepsilon}{4},1-\frac{3\varepsilon}{4}\big)$} yields:
\begin{equation}\label{ma5}
\int_{\frac{3\varepsilon}{4}}^{1-\frac{3\varepsilon}{4}}\big[\ii\omega_n\rho_1p_2v_n\ol{\phi_n^{\prime}}-[\kappa_1(w_n^{\prime}+\phi_n)+a_1(v_n^{\prime}+\psi_n)]^{\prime}p_2\ol{\phi_n^{\prime}}\;\big]dx=\o_n^{-\b}o(1).
\end{equation}
Using integration by parts, along with \eqref{ps2} and \eqref{0}, and the fact that $\|\psi_n\|_{L^2(-1,1)}$ is bounded, we have
\begin{equation}\label{ma6}
\begin{array}{lcl}\displaystyle
\ii\omega_n\rho_1\int_{\frac{3\varepsilon}{4}}^{1-\frac{3\varepsilon}{4}}p_2v_n\ol{\phi_n^{\prime}}dx
&=&\displaystyle
-\ii\omega_n\rho_1\int_{\frac{3\varepsilon}
	{4}}^{1-\frac{3\varepsilon}{4}}p_2v_n^{\prime}\ol{\phi_n}dx
+\rho_1\int_{\frac{3\varepsilon}{4}}^{1-\frac{3\varepsilon}{4}}p_2^{\prime}v_n\ol{(\psi_n+\o_n^{-\b}f_{2n})}dx
\\ \noalign{\medskip} &=& \displaystyle
 -\ii\omega_n\rho_1\int_{\frac{3\varepsilon}{4}}^{1-\frac{3\varepsilon}{4}}p_2v_n^{\prime}\ol{\phi_n}dx+o(1).
\end{array}
\end{equation}
Using \eqref{H2}, \eqref{ps2}, \eqref{ps5}, and \eqref{o1}, we can easily see that:
\begin{equation}\label{ma7}
\begin{array}{lcl}
&&\displaystyle -\ii\omega_n\rho_1\int_{\frac{3\varepsilon}{4}}^{1-\frac{3\varepsilon}{4}}p_2v_n^{\prime}\ol{\phi_n}dx
\\ \noalign{\medskip}  &=& \displaystyle
-\ii\omega_n\rho_1\int_{\frac{3\varepsilon}{4}}^{1-\frac{3\varepsilon}{4}}p_2a_1^{\frac{1}{2}}(v_n^{\prime}+\psi_n)a_1^{-\frac{1}{2}}\ol{\phi_n}dx
-\rho_1\int_{\frac{3\varepsilon}{4}}
^{1-\frac{3\varepsilon}{4}}p_2|\omega_n\phi_n|^2dx+\o_n^{\frac{\beta}{2}}o(1)
\\ \noalign{\medskip} &=& \displaystyle
-\rho_1\int_{\frac{3\varepsilon}{4}}
^{1-\frac{3\varepsilon}{4}}p_2|\omega_n\phi_n|^2dx+\o_n^{1-\frac{\beta}{2}}o(1).
\end{array}
\end{equation}
Then, substituting \eqref{ma7} into \eqref{ma6}, we obtain
\begin{equation}\label{ma8}
 \ii\omega_n\rho_1\int_{\frac{3\varepsilon}{4}}^{1-\frac{3\varepsilon}{4}}p_2v_n\ol{\phi_n^{\prime}}dx
=-\rho_1\int_{\frac{3\varepsilon}{4}}^{1-\frac{3\varepsilon}{4}}p_2|\omega_n\phi_n|^2dx+\o_n^{1-\frac{\beta}{2}}o(1).
\end{equation}
From \eqref{H2}, \eqref{ps5}-\eqref{ps6},
and the fact that $\|\phi_n^{\prime}\|_{L^2(-1,1)}$ and $\o_n^{-1}\|\phi_n^{\prime\prime}\|_{L^2(-1,1)}$ are
bounded, we have
\begin{equation}\label{ma9}
\begin{array}{lcl}&&\displaystyle -\int_{\frac{3\varepsilon}{4}}^{1-\frac{3\varepsilon}{4}}\big[\kappa_1(w_n^{\prime}+\phi_n)+a_1(v_n^{\prime}+\psi_n)\big]^{\prime}p_2\ol{\phi_n^{\prime}}dx
\\ \noalign{\medskip} &=& \displaystyle
\int_{\frac{3\varepsilon}{4}}^{1-\frac{3\varepsilon}{4}}\big[\kappa_1(w_n^{\prime}+\phi_n)+a_1(v_n^{\prime}+\psi_n)\big]
(p_2\ol{\phi_n^{\prime\prime}}+p_2^{\prime}\ol{\phi_n^{\prime}})dx
\\ \noalign{\medskip} &\leq& \displaystyle
C\big[\|a_1^{\frac{1}{2}}(w_n^{\prime}+\phi_n)\|_{L^2(0,1)}\big(\|a_1^{-\frac{1}{2}}\phi_n^{\prime\prime}\|_{L^2(\frac{3\varepsilon}{4},1-\frac{\varepsilon}{2})}
+\|a_1^{-\frac{1}{2}}\phi_n^{\prime}\|_{L^2(\frac{3\varepsilon}{4},1-\frac{3\varepsilon}{4})}\big)
\\ \noalign{\medskip} &&+\displaystyle
\|a_1^{\frac{1}{2}}(v_n^{\prime}+\psi_n)\|_{L^2(0,1)}\big(\|a_1^{\frac{1}{2}}\phi_n^{\prime\prime}\|_{L^2(0,1)}
+\|a_1^{\frac{1}{2}}\phi_n^{\prime}\|_{L^2(0,1)}\big)\big]=\o_n^{1-\frac{\beta}{2}}o(1).
\end{array}
\end{equation}
By substituting \eqref{ma8}-\eqref{ma9} into \eqref{ma5} and using $\beta=2$, we obtain the first inequality in \eqref{ma4}.

Next, multiplying \eqref{ps4} by $\o_n^{-\beta}\rho_2p_3\ol{\phi_n}$ and integrating over ${\color{black}(\frac{5\varepsilon}{4},1-\frac{5\varepsilon}{4})}$, we get
\begin{equation}\label{e31}
\int_{\frac{5\varepsilon}{4}}^{1-\frac{5\varepsilon}{4}}\big[\ii\omega_n\rho_2\psi_n-\kappa_2\phi_n^{\prime\prime}
+\kappa_1(w_n^{\prime}+\phi_n)+a_1(v_n^{\prime}+\psi_n)\big]p_3\ol{\phi_n}dx=\o_n^{-\b}o(1).
\end{equation}
Using  \eqref{ps2}, \eqref{ps5}, \eqref{o1}, \eqref{e31} and the fact that
$\|w_n^{\prime}+\phi_n\|_{L^2(-1,1)}$ is bounded, one has
\begin{equation}
-\rho_2\int_{\frac{5\varepsilon}{4}}^{1-\frac{5\varepsilon}{4}}p_3|\o_n\phi_n|^2dx
+\kappa_2
\int_{\frac{5\varepsilon}{4}}^{1-\frac{5\varepsilon}{4}}p_3|\phi_n^{\prime}|^2dx
+\kappa_2\int_{\frac{5\varepsilon}{4}}
^{1-\frac{5\varepsilon}{4}}p_3^{\prime}\phi_n\ol{\phi_n^{\prime}}dx= o(1).
\end{equation}
Therefore, combining these with the first estimate in \eqref{ma4},   we obtain the second estimate in \eqref{ma4}. This completes the proof.\hfill$\Box$

\begin{lemma}\label{le4} For $a_2(x)=0$, one has the following result holds
\begin{equation}\label{ma19}
\begin{array}{lcl}&&\displaystyle
\int_{-1}^{\frac{7\varepsilon}{4}} \big(\rho_2|\omega_n\phi_n|^2+\kappa_2|\phi_n^{\prime}|^2+\rho_1|\omega_n w_n|^2+\kappa_1|w_n^{\prime}|^2\big)dx
\\ \noalign{\medskip} &&+\displaystyle
\int_{1-\frac{7\varepsilon}{4}}^{1}\big( \rho_2|\omega_n\phi_n|^2+\kappa_2|\phi_n^{\prime}|^2+\rho_1|\omega_n w_n|^2+\kappa_1|w_n^{\prime}|^2\big)dx=o(1).
\end{array}
\end{equation}
\end{lemma}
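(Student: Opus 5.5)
The plan is a classical multiplier argument with the cut-off functions $q_1$ and $q_2$. We use the first-order multiplier $x$ (or a shifted version) times the cut-off, so that boundary terms vanish and the interior remainder is already known to be $o(1)$ by Lemmas~\ref{le2}--\ref{le3}.

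\textbf{Left piece.} Multiply \eqref{ma12} by $\rho_1 (x+1) q_1\ol{w_n'}$ and \eqref{ma13} by $\rho_2 (x+1) q_1\ol{\phi_n'}$, integrate over $(-1,1)$, take real parts and add.
\begin{itemize}
\item The right-hand sides are $o(1)$, since $\o_n^{1-\b}\|f_{jn}\|_{H^1}$ and $\o_n^{-\b}\|f_{jn}\|_{L^2}$ tend to zero and $w_n',\phi_n'$ are bounded.
\item On $[-1,0]$ we have $D_1=0$, so there $T_n=\kappa_1(w_n'+\phi_n)$. On $(0,1-\frac{7\varepsilon}4)$ the factor $q_1$ is nonzero and the damping term $a_1(v_n'+\psi_n)$ appears. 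Its contribution is handled by an integration by parts in which the derivative lands on $(x+1)q_1\ol{w_n'}$.
\end{itemize}

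\textbf{Standard identities.} The terms $-\Re\int \omega_n^2 w_n (x+1)q_1\ol{w_n'}$ and $-\Re\int T_n'(x+1)q_1\ol{w_n'}$ (with $T_n\approx\kappa_1 w_n'+\ldots$) yield
\[
\frac12\int \big((x+1)q_1\big)'\big(\rho_1|\o_n w_n|^2+\kappa_1|w_n'|^2\big)dx
\]
plus a boundary term at $x=-1$. That boundary term equals $(x+1)|w_n'|^2=0$ at $x=-1$, and $q_1=0$ at $x=1$. The same holds for $\phi_n$. Since $\big((x+1)q_1\big)'=q_1+(x+1)q_1'$ and $q_1'$ is supported in $[\frac{7\varepsilon}4,1-\frac{7\varepsilon}4]$, the $q_1'$ terms are $o(1)$ by Lemmas~\ref{le2}--\ref{le3}.

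\textbf{Coupling terms.} Terms such as $\kappa_1\phi_n\ol{w_n'}$ and $\kappa_1 (w_n'+\phi_n)\ol{\phi_n'}$ are treated using $\|\phi_n\|,\|w_n\|=o(1)$ from \eqref{o1}. The cross term $\kappa_1\int (x+1)q_1 (w_n'\ol{\phi_n'}+\phi_n'\ol{w_n'})$ combines, after integration by parts, into $-\kappa_1\int ((x+1)q_1)'\phi_n\ol{w_n'}$-type terms, which are $o(1)$. The result is
\[
\int_{-1}^{\frac{7\varepsilon}4}\big(\rho_1|\o_n w_n|^2+\kappa_1|w_n'|^2+\rho_2|\o_n\phi_n|^2+\kappa_2|\phi_n'|^2\big)dx=o(1).
\]
The right piece is handled symmetrically with multiplier $(x-1)q_2$. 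On the support of $q_2$ the damping is active, but $a_1>0$ is bounded below near $x=1$, so no degeneracy occurs there.

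\textbf{Main obstacle.} The hard step is controlling the damping contribution $\Re\int a_1(v_n'+\psi_n)\,\big((x+1)q_1\ol{w_n'}\big)'dx$ on $(0,1-\frac{7\varepsilon}4)$, near the degenerate point $x=0$.
\begin{itemize}
\item Split it as $a_1^{1/2}(v_n'+\psi_n)\cdot a_1^{1/2}\big(((x+1)q_1)'\ol{w_n'}+(x+1)q_1\ol{w_n''}\big)$.
\item The first factor is $\o_n^{-\b/2}o(1)=\o_n^{-1}o(1)$ by \eqref{ps5}.
\item To bound $\|a_1^{1/2}w_n''\|$ by $C\o_n$, use \eqref{ma12}: $T_n'=-\rho_1\o_n^2w_n+o(1)$, i.e. $\big(\kappa_1 w_n'+a_1(v_n'+\psi_n)\big)'$ is $\cO(\o_n)$ in $L^2$. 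Combined with $v_n=\ii\o_n w_n-\o_n^{-\b}f_{1n}$ and \eqref{H2}(ii), which gives $|a_1'|\le \alpha_1 a_1/x$, this should produce a weighted bound $\|a_1^{1/2}w_n''\|\lesssim \o_n$ via a Hardy-type argument. This is the step I expect to require the most care.
\item An alternative that avoids $w_n''$ is to write $\ol{w_n'}=\ol{(w_n'+\phi_n)}-\ol{\phi_n}$ and substitute $T_n$ directly. Then $\Re\int T_n\,\big((x+1)q_1\ol{(w_n'+\phi_n)}\big)'$, after using $\kappa_1(w_n'+\phi_n)=T_n-a_1(v_n'+\psi_n)$, reduces to $\frac1{2\kappa_1}\int((x+1)q_1)'|T_n|^2$ plus terms weighted by $a_1(v_n'+\psi_n)$. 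The latter are small by \eqref{ps5}--\eqref{ps6}.
\end{itemize}
I would try this $T_n$-formulation first, since it makes the damping term appear only through $\|a_1^{1/2}(v_n'+\psi_n)\|$ and $\|a_1^{1/2}(w_n'+\phi_n)\|$.
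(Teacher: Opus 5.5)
Your $T_n$-formulation is essentially the paper's proof, so commit to it. The paper multiplies \eqref{ma12} by $2\rho_1\kappa_1^{-1}h\overline{S_n}$, where $S_n=\kappa_1w_n'+D_1(v_n'+\psi_n)=T_n-\kappa_1\phi_n$, and multiplies \eqref{ma13} by $2\rho_2h\overline{\phi_n'}$, using the single weight $h=(x+1)q_1+(x-1)q_2$. The full stress, damping included, then enters only through $\frac12 h(|S_n|^2)'$, and no derivative ever falls on $w_n'$ or on $a_1(v_n'+\psi_n)$ alone. Three points in the proposal need fixing.

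(i) Drop the $w_n''$ route entirely, and do not integrate by parts onto $\big(h\overline{(w_n'+\phi_n)}\big)'$. In $D(\cA)$ only $T_n\in H^1$ is known on $(0,1)$. Since $v_n=\ii\omega_nw_n-\omega_n^{-\beta}f_{1n}$ with $f_{1n}$ merely in $H^1_0$, $w_n'$ need not be in $H^1$ there, even where $a_1$ is bounded below. Even formally, differentiating $(\kappa_1+\ii\omega_na_1)w_n'$ produces $\omega_na_1'w_n'$ with $|a_1'|$ up to $\alpha_1a_1/x$, a singular weight on $w_n'$ that none of the available estimates controls. Instead, substitute $\kappa_1(w_n'+\phi_n)=T_n-D_1(v_n'+\psi_n)$ directly into $-\Re\int T_n'h\overline{(w_n'+\phi_n)}$. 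This gives $\frac1{2\kappa_1}\int h'|T_n|^2$ plus a residual.

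(ii) That residual, $\kappa_1^{-1}\Re\int T_n'h\overline{D_1(v_n'+\psi_n)}$, is not small by \eqref{ps5}--\eqref{ps6} alone. You need $\|T_n'\|_{L^2}=\cO(\omega_n)$, which follows from \eqref{ma12} and $\|\omega_nw_n\|=\cO(1)$. Pair this with $\|D_1(v_n'+\psi_n)\|\le C\omega_n^{-\beta/2}o(1)$. This is exactly where $\beta=2$ is used; in the paper the same residual appears as $\omega_n\int_0^1hv_n\overline{a_1(v_n'+\psi_n)}$ in \eqref{ma16}.

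(iii) Your coupling paragraph is wrong as written. A term $\kappa_1\int h(w_n'\overline{\phi_n'}+\phi_n'\overline{w_n'})=2\kappa_1\Re\int hw_n'\overline{\phi_n'}$ is not a priori small. Integrating by parts only turns it into a multiple of $\int h\,\omega_nw_n\,\overline{\omega_n\phi_n}$ plus $o(1)$, which is still just $\cO(1)$. The signs are in fact opposite: \eqref{ma12} contributes $-\kappa_1\Re\int h\phi_n'\overline{w_n'}$ and \eqref{ma13} contributes $+\kappa_1\Re\int hw_n'\overline{\phi_n'}$, and these cancel identically. This is how \eqref{ma17} and \eqref{ma18} combine into \eqref{ma11}. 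In your $T_n$-version the cancellation looks like this: the piece $\Re\int T_n'h\overline{\phi_n}$ coming from $w_n'=(w_n'+\phi_n)-\phi_n$ and the piece $\Re\int T_nh\overline{\phi_n'}$ from \eqref{ma13} add up to $\Re\int h(T_n\overline{\phi_n})'=-\Re\int h'T_n\overline{\phi_n}$, which is $o(1)$ by \eqref{o1}.

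With these corrections you obtain \eqref{ma11} with $T_n$ in place of $S_n$. The rest of your plan then goes through: the $q_1',q_2'$ terms are handled by Lemmas~\ref{le2}--\ref{le3}, and replacing $\kappa_1^{-1}|T_n|^2$ by $\kappa_1|w_n'|^2$ on the end intervals costs only $o(1)$ by \eqref{o1} and \eqref{ps5}.
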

\noindent{\bf Proof.}
Multiplying \eqref{ma13} by $2\rho_2h\ol{\phi_n^{\prime}}$ and integrating over $(-1,1)$, where $h\in C^1(-1,1)$ and $h(-1)=h(1)=0$, then using
\eqref{ps5}, \eqref{o1}, and   \eqref{bounded},
we obtain
\begin{equation}\label{ma17}
\begin{array}{lcl}&&\displaystyle
Re \int_{-1}^1\big[h^{\prime}\big(\rho_2|\omega_n\phi_n|^2+\kappa_2|\phi_n^{\prime}|^2\big)+2\kappa_1(w_n^{\prime}+\phi_n)h\ol{\phi_n^{\prime}}\;\big]dx
+2 Re \int_0^1a_1(v_n^{\prime}+\psi_n)h\ol{\phi_n^{\prime}}dx
\\ \noalign{\medskip} &=& \displaystyle
Re \int_{-1}^1\big[h^{\prime}\big(\rho_2|\omega_n\phi_n|^2
+\kappa_2|\phi_n^{\prime}|^2\big)
+2\kappa_1hw_n^{\prime}\ol{\phi_n^{\prime}}\;\big]dx+ o(1)= o(1),
\end{array}
\end{equation}
 Set $S_n\doteq\kappa_1w_n^{\prime}+D_1(v_n^{\prime}+\psi_n),$ {\color{black} from \eqref{ps5} and \eqref{bounded}, the definition of $D_1$, we get that $S_n$
is uniformly
bounded in $L^2(0, 1)$.}
Then,
multiplying both sides of \eqref{ma12} by $2\rho_1\kappa_1^{-1}h\ol{S_n}$ and integrating over $(-1,1)$, we obtain the following estimation by using \eqref{ps5} and the fact that $\|\phi_n^{\prime}\|_{L^2(-1,1)}$ is bounded:
\begin{equation}\label{ma15}
\begin{array}{lcl}&&\displaystyle
Re \int_{-1}^1 \big[-\rho_1\omega_n^2 w_{n}-[\kappa_1(w_n^{\prime}+\phi_n)+D_1(v_n^{\prime}+\psi_n)]^{\prime}\;\big]2\kappa_1^{-1}h\ol{S_n} dx
\\ \noalign{\medskip} &=& \displaystyle
Re\int_{-1}^1 \big(\kappa_1^{-1}h^{\prime}|S_n|^2-2\rho_1\kappa_1^{-1}\omega_n^2 w_{n}h\ol{S_n}-2\kappa_1h\phi_n^{\prime}\ol{w_n^{\prime}}\;\big)dx-
2Re\int_0^1h\phi_n^{\prime}\ol{a_1(v_n^{\prime}+\psi_n)}dx
\\ \noalign{\medskip} &=& \displaystyle
Re\int_{-1}^1 \big(\kappa_1^{-1}h^{\prime}|S_n|^2-2\rho_1\kappa_1^{-1}\omega_n^2 w_{n}h\ol{S_n}\big)dx-2\kappa_1Re\int_{-1}^1h\phi_n^{\prime}\ol{w_n^{\prime}}dx+\o_n^{-\frac{\b}{2}}o(1)
=\o_n^{1-\b}o(1).\end{array}
\end{equation}
By \eqref{ps1}, \eqref{ps5} and $\|v_n\|_{L^2(-1,1)}$ is bounded, we get
\begin{equation}\label{ma16}
\begin{array}{lcl}&&\displaystyle
-2\rho_1\kappa_1^{-1}\omega_n^2\int_{-1}^1  w_{n}h\ol{S_n}dx=-2\rho_1\kappa_1^{-1}\omega_n^2\int_{-1}^1 w_{n}h\ol{\big[\k_1w_n^{\prime}+D_1(v_n^{\prime}+\psi_n)\big]}dx
\\ \noalign{\medskip} &=&\displaystyle
\rho_1\int_{-1}^1 h^{\prime}|\omega_n w_n|^2dx+2\ii\omega_n\rho_1k_1^{-1}\int_0^1h v_{n}\ol{a_1(v_n^{\prime}+\psi_n)}dx+\o_n^{1-\frac{3\beta}{2}}o(1)
\\ \noalign{\medskip} &=& \displaystyle
\rho_1\int_{-1}^1 h^{\prime}|\omega_n w_n|^2dx+\o_n^{1-\frac{\beta}{2}}o(1).
\end{array}
\end{equation}
Finally, inserting \eqref{ma16} into \eqref{ma15} and using \eqref{ma12} along with the fact that $\|w_n^{\prime}\|_{L^2(-1,1)}$ is bounded and $\beta=2$, we obtain
\begin{equation}\label{ma18}
\int_{-1}^1 h^{\prime}\big(\rho_1|\omega_n w_n|^2+\k_1^{-1}|S_n|^2\big)dx-2\kappa_1\int_{-1}^1h\phi_n^{\prime}\ol{w_n^{\prime}}dx=o(1).
\end{equation}
Consequently, we conclude the following estimate by   \eqref{ma17} and \eqref{ma18}:
\begin{equation}\label{ma11}
\int_{-1}^1 h^{\prime}\big(\;\rho_1|\omega_n w_n|^2+\kappa_1^{-1}|S_n|^2+\rho_2|\omega_n\phi_n|^2+\kappa_2|\phi_n^{\prime}|^2\big)dx=o(1).
\end{equation}

\noindent
Now, let $h=(x+1)q_1+(x-1)q_2$ in \eqref{ma11}. Then using the definition of $q_i$ and $D_1$, and combining \eqref{ps5}, \eqref{bounded}, and Lemma \ref{le2}-Lemma \ref{le3}, we deduce that
\begin{equation}\label{ma20}
\begin{array}{lcl}&&\displaystyle
\int_{-1}^{\frac{7\varepsilon}{4}}\big( \rho_2|\omega_n\phi_n|^2+\kappa_2|\phi_n^{\prime}|^2+\rho_1|\omega_n w_n|^2+\kappa_1|w_n^{\prime}|^2\big)dx
\\ \noalign{\medskip} &&\displaystyle
+\int_{1-\frac{7\varepsilon}{4}}^{1} \big(\rho_2|\omega_n\phi_n|^2+\kappa_2|\phi_n^{\prime}|^2+\rho_1|\omega_n w_n|^2+
\kappa_1|w_n^{\prime}|^2\big)dx
\\ \noalign{\medskip} &=&\displaystyle
-\int_{\frac{7\varepsilon}{4}}^{1-\frac{7\varepsilon}{4}} \big[q_1+(x+1)q_1^{\prime}\big]\big(\rho_2|\omega_n\phi_n|^2+\kappa_2|\phi_n^{\prime}|^2+\rho_1|\omega_n w_n|^2+\kappa_1^{-1}|S_n|^2\;\big)dx
\\ \noalign{\medskip} &&- \displaystyle
\int_{\frac{7\varepsilon}{4}}^{1-\frac{7\varepsilon}{4}} \big[q_2+(x-1)q_2^{\prime}\big]\big(\rho_2|\omega_n\phi_n|^2+\kappa_2|\phi_n^{\prime}|^2+\rho_1|\omega_n w_n|^2+\kappa_1^{-1}|S_n|^2\;\big)dx
\\ \noalign{\medskip} &&- \displaystyle
\int_{\left[0,\frac{7\varepsilon}{4}\right]\cup\left[1-\frac{7\varepsilon}{4},1\right]}\big[2w_n^{\prime}a_1(v_n^{\prime}+\psi_n)
+\kappa_1^{-1}|a_1(v_n^{\prime}+\psi_n)|^2\big]dx+o(1)
\\ \noalign{\medskip} &\leq&\displaystyle
C\Big(\|\omega_n\phi_n\|_{L^2(\frac{5\varepsilon}{4},1-\frac{5\varepsilon}{4})}^2
+\|\phi_n'\|_{L^2(\frac{7\varepsilon}{4},1-\frac{7\varepsilon}{4})}^2
+\|\omega_nw_n\|_{L^2(\frac{3\varepsilon}{4},1-\frac{3\varepsilon}{4})}^2
+\|w_n'\|_{L^2(\frac{5\varepsilon}{4},1-\frac{5\varepsilon}{4})}^2
\\ \noalign{\medskip} &&\displaystyle
+\|w_n'\|_{L^2(-1,1)}\|a_1^{\frac{1}{2}}(v_n^{\prime}+\psi_n)\|_{L^2(0,1)}
+\|a_1^{\frac{1}{2}}(v_n^{\prime}+\psi_n)\|_{L^2(0,1)}^2\Big)+o(1)=o(1).
\end{array}\nonumber
\end{equation}
Therefore, \eqref{ma19} is proved. \hfill$\Box$


\subsection{The case $a_1(x)=0,\;a_2(x)>0$}
This subsection is devoted to proving the stability of the system \eqref{system} when $a_1(x)=0$ and $a_2(x)>0$, i.e., when the local viscoelastic damping affects   the bending moment. To achieve this goal, we present the following lemmas.
\begin{lemma} \label{le6}If $a_1(x)=0$, it holds
\begin{equation}\label{ma21}
\|\omega_n\phi_n\|_{L^2(0,1)}
=\omega_n^{-\frac{\beta}{2}}o(1),
\quad\|\phi_n^{\prime}\|_{L^2(0,1)}
=\omega_n^{-\frac{\beta}{2}}o(1).
\end{equation}
\end{lemma}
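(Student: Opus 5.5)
The plan is to turn the weighted estimates \eqref{ps5}--\eqref{ps6} on $(0,1)$ into unweighted ones, using the fact that $a_2^{-1}$ is integrable near $0$ under \eqref{H2}. First, from (ii) of \eqref{H2} one has $\big(a_2(x)x^{-\alpha_2}\big)'=a_2x^{-\alpha_2}\big(a_2'/a_2-\alpha_2/x\big)\le 0$ on $(0,1]$. Hence $a_2(x)x^{-\alpha_2}\ge a_2(1)$, i.e. $a_2(x)\ge a_2(1)x^{\alpha_2}$. Since $\alpha_2<1$, this gives $\int_0^1 a_2^{-1}dx<\infty$.

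\textbf{Step 1: the bound on $\omega_n\phi_n$.} Because $\psi_n(1)=0$, we can write $\psi_n(x)=-\int_x^1\psi_n'\,ds$. Cauchy--Schwarz then gives
$$|\psi_n(x)|\le \|a_2^{1/2}\psi_n'\|_{L^2(0,1)}\Big(\int_0^1a_2^{-1}\Big)^{1/2},$$
so \eqref{ps5} yields $\|\psi_n\|_{L^\infty(0,1)}=\omega_n^{-\beta/2}o(1)$. Combined with \eqref{ps2}, i.e. $\omega_n\phi_n=-\ii(\psi_n+\omega_n^{-\beta}f_{2n})$, and the embedding $H_0^1\subset L^\infty$, this gives both $\|\omega_n\phi_n\|_{L^2(0,1)}=\omega_n^{-\beta/2}o(1)$ and the pointwise bound $|\phi_n(0)|=\omega_n^{-1-\beta/2}o(1)$. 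This is the first estimate in \eqref{ma21}.

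\textbf{Step 2: a bound on $R_n(0)$.} On $(0,1)$ equation \eqref{ps4} reads $R_n'=\ii\rho_2\omega_n\psi_n+\kappa_1(w_n'+\phi_n)-\rho_2\omega_n^{-\beta}f_{4n}$. By Step 1, $\|\omega_n\psi_n\|_{L^2(0,1)}=\omega_n^{1-\beta/2}o(1)=o(1)$ when $\beta=2$, and $w_n'+\phi_n$ is bounded by \eqref{bounded}. So $R_n'=\cO(1)$ in $L^2(0,1)$. Also $R_n=\kappa_2\phi_n'+a_2\psi_n'$ is $\cO(1)$ in $L^2(0,1)$ by \eqref{bounded} and \eqref{ps5}, since $a_2$ is bounded. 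By the one-dimensional Sobolev embedding, $|R_n(0)|=\cO(1)$.

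\textbf{Step 3: the bound on $\phi_n'$.} Multiply \eqref{ps4} by $\omega_n^{-\beta}\rho_2\ol{\phi_n}$ and integrate over $(0,1)$. Integrating by parts, using $\phi_n(1)=0$, gives
$$\int_0^1\big(\ii\rho_2\omega_n\psi_n\ol{\phi_n}+\kappa_2|\phi_n'|^2+a_2\psi_n'\ol{\phi_n'}+\kappa_1(w_n'+\phi_n)\ol{\phi_n}\big)dx+R_n(0)\ol{\phi_n(0)}=\omega_n^{-\beta}\int_0^1f_{4n}\ol{\phi_n}\,dx.$$
Each term other than $\kappa_2\|\phi_n'\|^2$ is $\omega_n^{-\beta}o(1)$ with $\beta=2$:
\begin{enumerate}
\item[(a)] $|\omega_n\int\psi_n\ol{\phi_n}|\le\|\psi_n\|\,\|\omega_n\phi_n\|=\omega_n^{-\beta}o(1)$ by Step 1.
\item[(b)] $|\int a_2\psi_n'\ol{\phi_n'}|\le\|a_2^{1/2}\psi_n'\|\,\|a_2^{1/2}\phi_n'\|=\omega_n^{-1-\beta}o(1)$ by \eqref{ps5}--\eqref{ps6}.
\item[(c)] The $\kappa_1$ term is bounded by $\cO(1)\,\|\phi_n\|_{L^2(0,1)}=\omega_n^{-1-\beta/2}o(1)=\omega_n^{-\beta}o(1)$.
\item[(d)] The boundary term is bounded by $\cO(1)\cdot\omega_n^{-1-\beta/2}o(1)$ by Steps 1--2.
\item[(e)] The $f_{4n}$ term is clearly smaller.
\end{enumerate}
Hence $\|\phi_n'\|^2_{L^2(0,1)}=\omega_n^{-\beta}o(1)$, which is the second estimate in \eqref{ma21}.

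The main obstacle is the boundary term at $x=0$, which appears because we cannot integrate over the undamped part. It requires the sharp $L^\infty$ control of $\phi_n(0)$ from Step 1 together with the $\cO(1)$ bound on $R_n(0)$ from Step 2. The latter uses $\beta=2$ to make $\omega_n\psi_n$ bounded on $(0,1)$.
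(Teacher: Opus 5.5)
Your proof is correct and has the same skeleton as the paper's. Both use a weighted Cauchy--Schwarz bound, exploiting $\int_0^1 a_2^{-1}\,dx<\infty$, to make $\psi_n$ small on $(0,1)$. Both then apply the multiplier $\omega_n^{-\beta}\rho_2\ol{\phi_n}$ on $(0,1)$ and handle a boundary term at $x=0$.

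Two things differ. The first is minor. You bound $\|\psi_n\|_{L^\infty(0,1)}$ directly. The paper instead obtains $|\psi_n(0)|$ and then $\|\psi_n\|_{L^2(0,1)}$ through a separate Fubini computation with $\int_0^1\tau/a_2\,d\tau$. Your version is shorter and yields both at once.

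The second difference, your Step 2, is substantive. The paper controls the boundary term via $|\phi_n'(0)|\le\omega_n\cO(1)$, read off from \eqref{ps4} on all of $(-1,1)$, where $\omega_n\psi_n$ is only $\cO(\omega_n)$. Combined with $|\phi_n(0)|=\omega_n^{-1-\frac{\beta}{2}}o(1)$, this gives only $R_n\ol{\phi_n}\big|_0^1=\omega_n^{-\frac{\beta}{2}}o(1)$. That yields $\|\phi_n'\|_{L^2(0,1)}=\omega_n^{-\frac{\beta}{4}}o(1)$, not the claimed $\omega_n^{-\frac{\beta}{2}}o(1)$. The paper also records the right-hand side of its identity only as $o(1)$.

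You restrict to $(0,1)$, where Step 1 gives $\|\omega_n\psi_n\|_{L^2(0,1)}=o(1)$ for $\beta=2$. This gives $\|R_n\|_{H^1(0,1)}=\cO(1)$, so $|R_n(0)|=\cO(1)$ and the boundary term is $\omega_n^{-\beta}o(1)$. That is exactly the rate needed downstream in Lemma \ref{le7}, where $\|\phi_n'\|_{L^2(0,1)}$ is paired with $\|w_n''\|=\cO(\omega_n)$. So your argument closes a genuine gap in the published proof rather than merely rephrasing it.

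The missing factor $\rho_2$ on your $f_{4n}$ term is immaterial.
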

\noindent{\bf Proof.}
In fact, when $0\le a_2 <1$ and $0\leq\tau<1$, a direct computation gives that
	\begin{equation}
    \begin{array}{lcl}\label{add1}
	|\psi_{n}(\tau)|  &=&\displaystyle \Big|\int_\tau^1 \psi_{n}'dx \Big| \le  \Big(\int_0^1 a_2 | \psi_{n}'|^2 dx \Big)^{1\over2}
	\Big(\int_\tau^1 \frac{1}{a_2} dx \Big)^{1\over2}
    \\ \noalign{\medskip} &\leq& \displaystyle
     \Big(\int_0^1 a_2 | \psi_{n}'|^2 dx \Big)^{1\over2}
	\Big(\int_\tau^1 \frac{1}{a_2(1)x^{\alpha_2}} dx \Big)^{1\over2}.
    \end{array}
	\end{equation}
	Combining this with \eqref{H2} and \eqref{ps5}, and letting $\tau=0$, we obtain
 	\begin{equation}\label{a1}
    |\psi_{n}(0)|=\omega_n^{-\frac{\beta}{2}}o(1).
    \end{equation}
	Moreover, by \eqref{add1}, we have
	\begin{equation}\label{add2}
    \begin{array}{lcl}  \displaystyle
	\int_0^1 |\psi_{n}| ^2dx&\leq &\displaystyle
    \int_0^1 \Big(\int_0^1 a_2 | \psi_{n}'|^2 dx \Big)
	\Big(\int_\tau^1 \frac{1}{a_2} dx \Big)d\tau
    \\ \noalign{\medskip} &=& \displaystyle
    \int_0^1 a_2 | \psi_{n}'|^2 dx\int_0^1\frac{1}{a_2} d\tau\int_0^\tau dx
    \\ \noalign{\medskip} &=& \displaystyle
    \int_0^1 a_2 | \psi_{n}'|^2 dx\int_0^1\frac{\tau}{a_2} d\tau.
    \end{array}
	\end{equation}
From \eqref{H2}, we deduce that
\begin{equation}
 \int_0^1\frac{\tau}{a_2} d\tau
 \leq\frac{1}{a_2(1)}\int_0^1\tau^{1-\alpha_2} d\tau
 =\frac{1}{a_2(1)(2-\alpha_2)},
\end{equation}
which along with \eqref{ps5} and \eqref{add2} yields
    \begin{equation}\label{a2}
    \|\psi_{n}\|_{L^2(0,1)} = \omega_n^{-\frac{\beta}{2}}o(1).
    \end{equation}
Then, by \eqref{ps2} and \eqref{a2}, we have the first estimation of \eqref{ma21}.

\indent
Moreover, multiplying \eqref{ps4} by $\o_n^{-\beta}\rho_2\ol{\phi_n}$, using \eqref{ps2} and
integrating on $(0,1)$, one has
\begin{equation}\label{ma37}
\begin{array}{lcl}
&&\displaystyle
-\rho_2\|\psi_n\|_{L^2(0,1)}^2-R_n\ol{\phi_n}\big|_0^1+\int_0^1\big[(\kappa_2\phi_n'+a_{2}\psi_n')\ol{\phi_n'}+\kappa_1(w_n'+  \phi_n)\ol{\phi_n}\;\big]dx
\\ \noalign{\medskip} &=&\displaystyle
-\rho_2\|\psi_n\|_{L^2(0,1)}^2+\kappa_2\|\phi_n'\|_{L^2(0,1)}^2-R_n\ol{\phi_n}\big|_0^1
+\int_0^1\big[a_{2}\psi_n'\ol{\phi_n'}+\kappa_1(w_n' +  \phi_n)\ol{\phi_n}\;\big]dx
\\ \noalign{\medskip} &=&\displaystyle o(1).
\end{array}
\end{equation}
Now, by using \eqref{ps5}, \eqref{ps6}, the first estimation of \eqref{ma21} and the fact that $\|w_n' +  \phi_n\|_{L^2(-1,1)}$ is bounded, we arrive at
\begin{equation}\label{ma38}
\begin{array}{lcl}&&\displaystyle
\int_0^1\big[a_{2}\psi_n'\ol{\phi_n'}+\kappa_1(w_n' +  \phi_n)\ol{\phi_n}\;\big]dx
\\ \noalign{\medskip} &\leq&\displaystyle
\|a_2^{\frac{1}{2}}\psi_n'\|_{L^2(0,1)} \|a_2^{\frac{1}{2}}\phi_n'\|_{L^2(0,1)}+\kappa_1\|w_n' +  \phi_n\|_{L^2(0,1)}\|\phi_n\|_{L^2(0,1)}
=\omega_n^{-1-\frac{\beta}{2}}o(1).
\end{array}
\end{equation}
It follows from $\phi_n(0)=\omega_n^{-1-\frac{\beta}{2}}o(1)$ due to \eqref{ps2} and \eqref{a1}.
By \eqref{ps4}, \eqref{bounded}, one has $|\phi_n'(0)|\le C(\o_n \|\psi_n\|_{L^2(-1,1)} +\|T_n\|_{L^2(-1,1)} ) + \o_n^{-\beta} o(1) \le \o_n \cO(1). $
Combining these with  $\phi_n(1)=0$, we obtain
\begin{equation}\label{ma39}
R_n\ol{\phi_n}\big|_0^1=
\big[(\kappa_2\phi_n'+a_{2}\psi_n')\ol{\phi_n}\,\big]_0^1
=-\kappa_2\phi_n'(0) \ol{\phi_n (0)}
=\omega_n^{-\frac{\beta}{2}}o(1).
\end{equation}
Therefore, by combining \eqref{ma37}-\eqref{ma39} and \eqref{a2}, we obtain the second estimate in \eqref{ma21}. Hence, we have completed the proof.\hfill$\Box$

\begin{lemma}\label{le7} For $a_1(x)=0$, one has the following estimates
\begin{equation}\label{ma22}
\int_{\frac{5\varepsilon}{4}}^{1-\frac{5\varepsilon}{4}}|w_n^{\prime}|^2dx=o(1),
\quad \int_{\frac{7\varepsilon}{4}}^{1-\frac{7\varepsilon}{4}}|\o_nw_n|^2dx=o(1).
\end{equation}
\end{lemma}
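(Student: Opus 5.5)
Since $a_1=0$, we have $T_n=\kappa_1(w_n'+\phi_n)$. Lemma \ref{le6} gives $\|\omega_n\phi_n\|_{L^2(0,1)}$ and $\|\phi_n'\|_{L^2(0,1)}$ of size $\omega_n^{-\beta/2}o(1)=\omega_n^{-1}o(1)$. The plan is to exploit the $\phi$-equation \eqref{ps4} on $(0,1)$: it expresses $T_n$ in terms of $\psi_n$ and $R_n'$, both of which are small there.

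\textbf{Estimate for $w_n'$.} Multiply \eqref{ps4} by $\omega_n^{-\beta}\rho_2 p_2\,\ol{(w_n'+\phi_n)}$ and integrate over $(\frac{3\varepsilon}{4},1-\frac{3\varepsilon}{4})$. This gives
\[
\int p_2\big[\ii\omega_n\rho_2\psi_n - R_n' + \kappa_1(w_n'+\phi_n)\big]\ol{(w_n'+\phi_n)}\,dx=\omega_n^{-\beta}o(1).
\]
The three terms are handled as follows.
\begin{itemize}
\item The first term is bounded by $C\omega_n\|\psi_n\|_{L^2(0,1)}\|w_n'+\phi_n\|$. By \eqref{a2}, $\|\psi_n\|_{L^2(0,1)}=\omega_n^{-1}o(1)$, so this term is $o(1)$.
\item For the term $R_n'=(\kappa_2\phi_n'+a_2\psi_n')'$, integrate by parts to move the derivative onto $p_2\ol{(w_n'+\phi_n)}$. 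This produces $\ol{w_n''}$, which is controlled through \eqref{ma12}: $\kappa_1 w_n''=-\rho_1\omega_n^2w_n-\kappa_1\phi_n'+\text{small}$, so $\|w_n''\|=\cO(\omega_n)$. Paired against $\|\kappa_2\phi_n'+a_2\psi_n'\|_{L^2(\varepsilon/2,1)}$, this gives a contribution of order $\omega_n\cdot\omega_n^{-1}o(1)=o(1)$. Here $\|\phi_n'\|_{L^2(0,1)}=\omega_n^{-1}o(1)$ by \eqref{ma21}, and $\|a_2\psi_n'\|\le C\|a_2^{1/2}\psi_n'\|=\omega_n^{-1}o(1)$ by \eqref{ps5}.
\item The $p_2'$ terms are of the same type and are also $o(1)$.
\end{itemize}
Hence $\int p_2|w_n'+\phi_n|^2=o(1)$. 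Since $\phi_n=o(1)$ in $L^2$, the first claim of \eqref{ma22} follows on $(\frac{5\varepsilon}{4},1-\frac{5\varepsilon}{4})$.

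\textbf{Alternative for the $R_n'$ term.} One could instead substitute $R_n'=T_n+\rho_2\ii\omega_n\psi_n-\omega_n^{-\beta}\rho_2f_{4n}$. However, this is circular, so I prefer the integration by parts above. The main obstacle is precisely this $R_n'$ term: one has to check that the only available bound $\|w_n''\|=\cO(\omega_n)$ is exactly compensated by the gain $\omega_n^{-1}$ from Lemma \ref{le6}. This is where $\beta=2$ is used.

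\textbf{Estimate for $\omega_n w_n$.} Multiply \eqref{ma12} by $\rho_1p_3\ol{w_n}$ and integrate over $(\frac{5\varepsilon}{4},1-\frac{5\varepsilon}{4})$. Integrating by parts gives
\[
-\rho_1\int p_3|\omega_nw_n|^2+\kappa_1\int p_3(w_n'+\phi_n)\ol{w_n'}+\kappa_1\int p_3'(w_n'+\phi_n)\ol{w_n}=o(1).
\]
The right side is $o(1)$ because $\omega_n^{1-\beta}f_{1n}$ and $\omega_n^{-\beta}f_{3n}$ are small and $\|w_n\|=o(1)$. The middle and last terms are $o(1)$ by the first estimate and \eqref{o1}. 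Therefore $\int_{\frac{7\varepsilon}{4}}^{1-\frac{7\varepsilon}{4}}|\omega_nw_n|^2=o(1)$, which completes \eqref{ma22}.
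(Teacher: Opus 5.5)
Your proposal is correct and follows the paper's proof essentially step by step. The paper multiplies \eqref{ps4} by $\omega_n^{-\beta}\rho_2 p_2\overline{w_n'}$; you use $\overline{w_n'+\phi_n}$ instead, which is an inessential change. It then integrates the $R_n'$ term by parts and balances $\|w_n''\|_{L^2}=\mathcal{O}(\omega_n)$ against the $\omega_n^{-1}$ gain from Lemma \ref{le6}, exactly as you do, and this is where $\beta=2$ enters; the second estimate comes from the same multiplier $\rho_1 p_3\overline{w_n}$ on \eqref{ma12} over $(\frac{5\varepsilon}{4},1-\frac{5\varepsilon}{4})$.
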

\noindent{\bf Proof.}
Multiplying \eqref{ps4} by $\o_n^{-\b}\rho_2p_2\ol{w_n^{\prime}}$ and integrating over {\color{black}$\big(\frac{3\varepsilon}{4},1-\frac{3\varepsilon}{4}\big)$}, we obtain
\begin{equation}\label{ma23}
\int_{\frac{3\varepsilon}{4}}^{1-\frac{3\varepsilon}{4}}\big[\ii\omega_n\rho_2\psi_n-(\kappa_2\phi_n^{\prime}+a_2\psi_n^{\prime})^{\prime}+\kappa_1(w_n^{\prime}+\phi_n)\big]p_2\ol{w_n^{\prime}}dx=\o_n^{-\b}o(1).
\end{equation}
Using \eqref{o1}, \eqref{ma21}, \eqref{a2} and $\|w_n^{\prime}\|_{L^2(-1,1)}$ is bounded (see \eqref{bounded}), we get
\begin{equation}\label{ma24}
\int_{\frac{3\varepsilon}{4}}^{1-\frac{3\varepsilon}{4}}\big[\kappa_1p_2|w_n^{\prime}|^2-(\kappa_2\phi_n^{\prime}+a_2\psi_n^{\prime})^{\prime}p_2\ol{w_n^{\prime}}\;\big]dx=\o_n^{1-\frac{\beta}{2}}o(1).
\end{equation}
Note that $\|w_n^{\prime}\|_{L^2(-1,1)},\;\o_n^{-1}\|w_n^{\prime\prime}\|_{L^2(-1,1)}$ is bounded
due to \eqref{ps3} and \eqref{bounded}.
Then, by \eqref{H2}, \eqref{ps5}, \eqref{ps6}, \eqref{ma21}, we have
\begin{equation}\label{ma25}
\begin{array}{lcl}&&\displaystyle
-\int_{\frac{3\varepsilon}{4}}^{1-\frac{3\varepsilon}{4}}(\kappa_2\phi_n^{\prime}
+a_2\psi_n^{\prime})^{\prime}p_2\ol{w_n^{\prime}}dx=
\int_{\frac{3\varepsilon}{4}}^{1-\frac{3\varepsilon}{4}}(\kappa_2\phi_n^{\prime}+a_2\psi_n^{\prime})(p_2^{\prime}\ol{w_n^{\prime}}+p_2\ol{w_n^{\prime\prime}}\;)dx
\\ \noalign{\medskip} &\leq&\displaystyle
C\big(\|\phi_n^{\prime}\|_{L^2(0,1)}\|w_n^{\prime}\|_{L^2(-1,1)}
+\| \phi_n^{\prime}\|_{L^2(0,1)}\| w_n^{\prime\prime}\|_{L^2(\frac{3\varepsilon}{4},1-\frac{3\varepsilon}{4})}
\\ \noalign{\medskip} &&\displaystyle
+\|a_2^{\frac{1}{2}}\psi^{\prime}\|_{L^2(0,1)}\|a_2^{\frac{1}{2}}w_n^{\prime}\|_{L^2(0,1)}
+\|a_2^{\frac{1}{2}}\psi_n^{\prime}\|_{L^2(0,1)}\| w_n^{\prime\prime}\|_{L^2(0,1)}\big)
=\o_n^{1-\frac{\b}{2}}o(1).
\end{array}
\end{equation}
Substituting \eqref{ma25} into \eqref{ma24}, we obtain
 \begin{equation}
 \int_{\frac{3\varepsilon}{4}}^{1-\frac{3\varepsilon}{4}}p_2|w_n^{\prime}|^2dx=\o_n^{1-\frac{\b}{2}}o(1).
 \end{equation}
Therefore, we obtain the first estimate in \eqref{ma22} by the above inequality and the assumption $\beta=2$.


Now, multiplying \eqref{ma12} with $\rho_1p_3\ol{w_n}$ and integrating over ${\color{black}(\frac{5\varepsilon}{4},1-\frac{5\varepsilon}{4})}$, one has
\begin{equation}\label{ma26}
\int_{\frac{5\varepsilon}{4}}^{1-\frac{5\varepsilon}{4}}\big[\rho_1p_3|\o_nw_n|^2+[\kappa_1(w_n^{\prime}+\phi_n)]^{\prime}p_3\ol{w_n}\big]dx=\o_n^{1-\b}o(1).
\end{equation}
Combining \eqref{o1},  $\|w_n^{\prime}+\phi_n\|_{L^2(-1,1)}$ is bounded, and the first estimation in \eqref{ma22}, we can get
\begin{equation}\nonumber
\begin{array}{lcl}&&\displaystyle
\int_{\frac{5\varepsilon}{4}}^{1-\frac{5\varepsilon}{4}}[\kappa_1(w_n^{\prime}+\phi_n)]^{\prime}p_3\ol{w_n}dx
=
 -\kappa_1\int_{\frac{5\varepsilon}{4}}^{1-\frac{5\varepsilon}{4}}(w_n^{\prime}+\phi_n)(p_3'\ol{w_n}+p_3\ol{w_n^{\prime}})dx
\\ \noalign{\medskip} &\leq&\displaystyle
C\big(\|w_n^{\prime}+\phi_n\|_{L^2(-1,1)}\|w_n\|_{L^2(-1,1)}
+\|w_n^{\prime}\|_{L^2(\frac{5\varepsilon}{4},1-\frac{5\varepsilon}{4})}^2
+\|\phi_n\|_{L^2(0,1)}\|w_n^{\prime}\|_{L^2(\frac{5\varepsilon}{4},1-\frac{5\varepsilon}{4})}\big)=o(1).
\end{array}
\end{equation}
Substituting these into \eqref{ma26} and using the assumption that $\beta=2$  yields the second estimation in \eqref{ma22}.  \hfill$\Box$

\begin{lemma} \label{le8}For $a_1(x)=0$, we get the following estimate
\begin{equation}\label{ma31}
\int_{\big[-1,\frac{7\varepsilon}{4}\big]\cup\left[1-\frac{7\varepsilon}{4},1\right]} \left(\kappa_1|w_n^{\prime}|^2+\rho_1|\omega_nw_n|^2\right)dx
+\int_{-1}^{0}\big(\kappa_2|\phi_n^{\prime}|^2+\rho_2|\omega_n \phi_n|^2\big)dx=o(1).
\end{equation}
\end{lemma}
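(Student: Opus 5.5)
I will follow the multiplier strategy of Lemma \ref{le4}, now with $D_1\equiv0$ and the damping $D_2=a_2$ on $(0,1)$. The plan is to derive an identity of the form \eqref{ma11} for a suitable weight $h$, but with the roles of the two equations adapted. Set $S_n\doteq\kappa_2\phi_n'+D_2\psi_n'$. By \eqref{ps5} and \eqref{bounded}, $S_n$ is bounded in $L^2(-1,1)$; moreover Lemma \ref{le6} gives $\|S_n\|_{L^2(0,1)}=o(1)$ after combining with \eqref{ps5} and $a_2\le C$.

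First I multiply \eqref{ma12} by $2\rho_1 h\ol{w_n'}$, with $h\in C^1[-1,1]$ and $h(\pm1)=0$, integrate over $(-1,1)$ and take real parts. Here $T_n=\kappa_1(w_n'+\phi_n)$, and the terms involving $\phi_n$ are handled via \eqref{o1} and $\|\phi_n'\|$ bounded, which yields
\[
\int_{-1}^1 h'\big(\rho_1|\omega_nw_n|^2+\kappa_1|w_n'|^2\big)dx+2\kappa_1\Re\int_{-1}^1h\phi_n'\ol{w_n'}dx=o(1).
\]
Second I multiply \eqref{ma13} by $2\rho_2\kappa_2^{-1}h\ol{S_n}$ and treat the inertial term $\omega_n^2\phi_n h\ol{S_n}$ as in \eqref{ma16}. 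The part $\kappa_2\phi_n'$ integrates to $h'|\omega_n\phi_n|^2$. For the part $a_2\psi_n'$ on $(0,1)$ I write $\omega_n^2\phi_n=-\ii\omega_n\psi_n+o(\omega_n^{-1})$ and use $\|\psi_n\|_{L^2(0,1)}=\omega_n^{-1}o(1)$ from \eqref{a2} together with \eqref{ps5}. The term $T_nh\ol{S_n}$ gives $2\kappa_1\Re\int h w_n'\ol{\phi_n'}$ up to $o(1)$, because $\|a_2^{1/2}\psi_n'\|=o(\omega_n^{-1})$. Adding the two identities cancels the cross terms, exactly as in \eqref{ma11}, and leaves
\[
\int_{-1}^1h'\big(\rho_1|\omega_nw_n|^2+\kappa_1|w_n'|^2+\rho_2|\omega_n\phi_n|^2+\kappa_2^{-1}|S_n|^2\big)dx=o(1).
\]

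Next I choose the weights. For the $w$-part I take $h=(x+1)q_1+(x-1)q_2$ as in Lemma \ref{le4}. Then $h'=1$ on $[-1,\frac{7\varepsilon}{4}]\cup[1-\frac{7\varepsilon}{4},1]$, and the remaining contributions live on $(\frac{7\varepsilon}{4},1-\frac{7\varepsilon}{4})$. There the $w$-terms are $o(1)$ by Lemma \ref{le7}, and the $\phi$-terms are $o(1)$ by Lemma \ref{le6}, since that middle interval lies in $(0,1)$. This gives the first integral in \eqref{ma31}, together with the $\phi$-energy on $[-1,\frac{7\varepsilon}{4}]$; in particular it gives the $\phi$-energy on $(-1,0)$, noting that $|S_n|^2=\kappa_2^2|\phi_n'|^2$ there. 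Should the cut-off interplay near $x=0$ be delicate, I would instead take $h=x+1$ on $[-1,0]$ and $h$ decreasing to $0$ on $[0,1]$. The $\phi$-terms on $(0,1)$ are $o(1)$ outright by Lemma \ref{le6}, so only the $w$-terms on $(0,1)$ must be controlled.

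The main obstacle is the cross terms in the second multiplier: $\omega_n\int_0^1 h\,\psi_n\ol{a_2\psi_n'}$ and $\int_0^1 h\,a_2\psi_n'\ol{w_n'}$. The first is bounded by $\omega_n\|\psi_n\|_{L^2(0,1)}\|a_2^{1/2}\psi_n'\|\le \omega_n\cdot\omega_n^{-1}o(1)\cdot\omega_n^{-1}o(1)=o(1)$. The second is $\le C\|a_2^{1/2}\psi_n'\|=o(1)$. So $\beta=2$ suffices. I would double-check the $w_n''$-free handling of $T_n'$ when $D_1=0$, which requires only $h\in C^1$.
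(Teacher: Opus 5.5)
Your proposal is correct and is essentially the paper's proof. It uses the same two multipliers $2\rho_1h\ol{w_n'}$ and $2\rho_2\kappa_2^{-1}h\ol{R_n}$ (your $S_n$ is the paper's $R_n$), the same treatment of the inertial term via $\omega_n^2\phi_n=-\ii\omega_n\psi_n+o(\omega_n^{-1})$ and \eqref{a2}, the summed identity \eqref{ma34}, and the weight $h=(x+1)q_1+(x-1)q_2$ combined with Lemmas \ref{le6}--\ref{le7}. One slip: after integrating by parts, the cross term in your first identity is $-2\kappa_1\Re\int_{-1}^1 h\phi_n'\ol{w_n'}\,dx$ (as in \eqref{ma33}), not $+$, and it is this minus sign that makes the cross terms cancel when you add the identities; your fallback weight is not needed, and on its own it would leave the $w$-energy near $x=0$ and $x=1$ uncontrolled.
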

\noindent{\bf Proof.}
Multiplying \eqref{ma12} by $2\rho_1h\ol{w_n^{\prime}}$ and integrating over $(-1,1)$, where $h\in C^1(-1,1)$ and $h(-1)=h(1)=0$,
we obtain
\begin{equation}\label{ma33}
\int_{-1}^1 h^{\prime}\big(\rho_1|\omega_nw_n|^2+\kappa_1|w_n^{\prime}|^2\big)dx-2\kappa_1Re \int_{-1}^1h\phi_n^{\prime}
\ol{w_n^{\prime}}dx=\o_n^{1-\b}o(1).
\end{equation}
 From \eqref{bounded} and \eqref{o1}, the definition of $D_2$, we have $R_n$
is uniformly
bounded in $L^2(0, 1)$. Then, multiplying \eqref{ma13} by $2\rho_2\kappa_2^{-1}h\ol{R_n}$ and integrating over $(-1,1)$, and utilizing \eqref{ps5}, \eqref{o1}, \eqref{bounded}, we obtain
\begin{equation}\label{ma28}
\begin{array}{lcl}&&\displaystyle
Re\int_{-1}^1 \big[-\omega_n^2\rho_2 \phi_{n}-(\kappa_2\phi_n^{\prime}+D_2\psi_n^{\prime})^{\prime}+\kappa_1(w_n^{\prime}+\phi_n)\;\big]2\kappa_2^{-1}h\ol{R_n}dx
\\ \noalign{\medskip} &=& \displaystyle
Re\int_{-1}^1 \big(\kappa_2^{-1}h^{\prime}|R_n|^2-2\rho_2\kappa_2^{-1}\omega_n^2\phi_{n}h\ol{R_n}+
2\kappa_1\kappa_2^{-1}(w_n^{\prime}+\phi_n)h\ol{R_n}\; \big)dx
\\ \noalign{\medskip} &=& \displaystyle
Re\int_{-1}^1\big(\kappa_2^{-1}h^{\prime}|R_n|^2-2\rho_2\kappa_2^{-1}\omega_n^2\phi_{n}h\ol{ R_n}\big)dx+2\kappa_1Re\int_{-1}^1hw_n^{\prime}\ol{\phi_n^{\prime}}dx+o(1)
= \o_n^{1-\beta} o(1).
\end{array}
\end{equation}
From \eqref{ps2}, \eqref{ps5}, and \eqref{a2}, it is easy to get
\begin{equation}\label{ma29}
\begin{array}{lcl}&&\displaystyle
 -2\rho_2\kappa_2^{-1}\omega_n^2 Re\int_{-1}^1\phi_{n}h\ol{R_n}dx=-2\rho_2\kappa_2^{-1}\omega_n^2Re\int_{-1}^1  \phi_{n}h\ol{(\kappa_2\phi_n^{\prime}+D_2\psi_n^{\prime})}dx
\\ \noalign{\medskip} &=&\displaystyle
\rho_2\int_{-1}^1 h^{\prime}|\omega_n \phi_n|^2dx+2\omega_n\rho_2\kappa_2^{-1}Re\int_0^1h \ii\psi_{n}a_2\ol{\psi_n^{\prime}}dx+\o_n^{1-\frac{3\b}{2}}o(1)
\\ \noalign{\medskip} &=& \displaystyle
\rho_2\int_{-1}^1 h^{\prime}|\omega_n \phi_n|^2dx+\o_n^{1-\b}o(1).
\end{array}
\end{equation}
Substituting \eqref{ma29} into \eqref{ma28},   utilizing \eqref{ma13}  and the assumption $\b=2$,  we have
\begin{equation}\label{ma30}
\int_{-1}^1 h^{\prime}\big(\rho_2|\omega_n \phi_n|^2+\kappa_2^{-1}|R_n|^2\big)dx+2\kappa_1 Re \int_{-1}^1hw_n^{\prime}\ol{\phi_n^{\prime}}dx=o(1).
\end{equation}
Therefore, by \eqref{ma30} and \eqref{ma33}, we arrive at
\begin{equation}\label{ma34}
\int_{-1}^1 h^{\prime}\big(\kappa_1|w_n^{\prime}|^2+\rho_1|\omega_n w_n|^2+\rho_2|\omega_n\phi_n|^2+\kappa_2^{-1}|\kappa_2\phi_n^{\prime}+D_2\psi_n^{\prime}|^2\big)dx=o(1).
\end{equation}

 Now, taking $h=(x+1)q_1+(x-1)q_2$ in \eqref{ma34}, we deduce that
\begin{equation}\label{ma32}
\begin{array}{lcl}&&\displaystyle
 \int_{-1}^{\frac{7\varepsilon}{4}}\big( \kappa_1|w_n^{\prime}|^2+\rho_1|\omega_n w_n|^2\big)dx+\int_{-1}^{0}\big(\kappa_2|\phi_n^{\prime}|^2+\rho_2|\omega_n\phi_n|^2\big)dx
\\ \noalign{\medskip}&&\displaystyle
+\int_{1-\frac{7\varepsilon}{4}}^{1} \big(\kappa_1|w_n^{\prime}|^2+\rho_1|\omega_n w_n|^2\big)dx
\\ \noalign{\medskip} &=&\displaystyle
-\int_{\frac{7\varepsilon}{4}}^{1-\frac{7\varepsilon}{4}} \big[q_1+(x+1)q_1^{\prime}\big]\big(\kappa_1|w_n^{\prime}|^2+\rho_1|\omega_n w_n|^2+\rho_2|\omega_n\phi_n|^2+\kappa_2^{-1}|\kappa_2\phi_n^{\prime}+a_2\psi_n^{\prime}|^2\big)dx
\\ \noalign{\medskip} &&- \displaystyle
\int_{\frac{7\varepsilon}{4}}^{1-\frac{7\varepsilon}{4}} \big[q_2+(x-1)q_2^{\prime}\big]\big(k_1|w_n^{\prime}|^2+\rho_1|\omega_n w_n|^2+\rho_2|\omega_n\phi_n|^2+\kappa_2^{-1}|\kappa_2\phi_n^{\prime}+a_2\psi_n^{\prime}|^2\big)dx
\\ \noalign{\medskip} &&- \displaystyle
\int_{\left[0,\frac{7\varepsilon}{4}\right]\cup\left[1-\frac{7\varepsilon}{4},1\right]}\big(\phi_n^{\prime}a_2\psi_n^{\prime}+\kappa_2^{-1}|a_2\psi_n^{\prime}|^2
+\kappa_2|\phi_n^{\prime}|^2
+\rho_2|\omega_n\phi_n|^2\big)dx +o(1).
\end{array}\nonumber
\end{equation}
Consequently, we conclude \eqref{ma31} by using Lemma \ref{le6} and Lemma \ref{le7}. The proof of Lemma \ref{le8} is finished. \hfill$\Box$

\noindent{\bf Proof of Theorem \ref{th-main}}
By   Lemma \ref{le2}-\ref{le4} for the case where $a_1(\cdot)>0,\;a_2(\cdot)=0$, and Lemma \ref{le6}-\ref{le8}  for the case where $a_1(\cdot)=0,\;a_2(\cdot)>0$, we deduce that $\|U_n\|_{\cal H}=o(1)$ . This  contradicts \eqref{unitnorm}. Thus, the proof of Theorem \ref{th-main} is complete.\hfill$\Box$

{\color{black}
\section{Conclusion}
\setcounter{equation}{0} \setcounter{theorem}{0}
\setcounter{lemma}{0} \setcounter{definition}{0}
\setcounter{proposition}{0} \setcounter{remark}{0}
\setcounter{corollary}{0} \setcounter{equation}{0}
In this paper, we  have shown that the system \eqref{system} with
 a single local weakly degenerate Kelvin-Voigt damping is   polynomially stable with rate   $t^{-\frac{1}{2}}$.
 The following table provides a summary of the   stability  properties of the semigroup $e^{t{\cal A}}$ corresponding to system \eqref{system} (\cite{Liu,R.Liu,Tian, Wehbe1,Zhao} and references therein):
\begin{center}
	\begin{tabular}{|c|c|c|}
		\hline
	$D_i$  	&$\alpha_i$    &  decay rate \\ \hline
		\multirow{5}{*}{$D_i \neq 0$ }
		&$\alpha_1,\;\alpha_2=0 $  &  polynomially stable of order $t^{-2}$ \\
		\cline{2-3}
		&$\alpha_1,\;\alpha_2\in [0,1)$  & polynomially stable of order $t^{-\frac{2-\alpha}{1-\alpha}}$ with $\alpha\doteq\min\{\alpha_1,\;\alpha_2\}$  \\
		\cline{2-3} &$\alpha_1\in [0,1),\; \alpha_2=1$  &  polynomially stable of order strictly less than $t^{-{1\over 1-\alpha_1}}$\\
		\cline{2-3}&$\alpha_1=1,\; \alpha_2\in [0,1) $   &  polynomially stable of order strictly less than $t^{-{1\over 1-\alpha_2}}$\\ 								
		\cline{2-3}&$\alpha_1,\; \alpha_2 \ge 1$   &exponentially stable \\ \hline
		 $D_1= 0 $ &$\alpha_2\in [0,1)$  &\multirow{2}{*}{polynomially stable of order $t^{-1/2}$} \\ \cline{1-2}
		 $D_2=   0$ &$\alpha_1\in [0,1)$  &
		 \\\hline
	\end{tabular}
\end{center}

There are still some
open questions to be resolved  about
the Timoshenko beam equation with local Kelvin-Voigt damping.
 In particular, it is still unknown whether the polynomial stability order obtained in this paper is optimal, as well as the stability for
 $\alpha_i\geq1$.}

\end{document}